\newtheorem{thm}{Theorem}
\newcommand{\armh}{L_{\textrm{horiz}}}
\newcommand{\armv}{L_{\textrm{vert}}}
\newcommand{\hallway}{L}
\newcommand{\rotmat}[1]{R_{#1}}
\newcommand{\R}{\mathbb{R}}
\newcommand{\nvecone}{\boldsymbol{\mu}}
\newcommand{\nvectwo}{\boldsymbol{\nu}}
\newcommand{\contact}{\Gamma}
\title{Differential equations and exact solutions in the moving sofa problem}
\author{Dan Romik\footnote{Department of Mathematics, University of California, Davis, One Shields Ave, Davis, CA 95616, USA. Email: \texttt{romik@math.ucdavis.edu}}}
\date{July 10, 2016}
\begin{document}
\maketitle

\begin{abstract}
The moving sofa problem, posed by L.~Moser in 1966, asks for the planar shape of maximal area that can move around a right-angled corner in a hallway of unit width, and is conjectured to have as its solution a complicated shape derived by Gerver in 1992. We extend Gerver's techniques by deriving a family of six differential equations arising from the area-maximization property. We then use this result to derive a new shape that we propose as a possible solution to the \emph{ambidextrous moving sofa problem}, a variant of the problem previously studied by Conway and others in which the shape is required to be able to negotiate a right-angle turn both to the left and to the right. Unlike Gerver's construction, our new shape can be expressed in closed form, and its boundary is a piecewise algebraic curve. 
Its area is equal to $X+\arctan Y$, where $X$ and $Y$ are solutions to the cubic equations $x^2(x+3)=8$ and $x(4x^2+3)=1$, respectively.
\end{abstract}

\renewcommand{\thefootnote}{\fnsymbol{footnote}} 
\footnotetext{\emph{Key words:} Moving sofa problem, ordinary differential equation, geometric optimization, algebraic number
}     
\footnotetext{\emph{2010 Mathematics Subject Classification:} 
49K15,
% 49-XX Calculus of variations and optimal control; optimization 
% 49Kxx Optimality conditions
% 49K15 Problems involving ordinary differential equations
49Q10.
% 49-XX Calculus of variations and optimal control; optimization 
% 49Qxx Manifolds
% 49Q10 Optimization of shapes other than minimal surfaces
}
\renewcommand{\thefootnote}{\arabic{footnote}}

\section{Introduction}

\label{sec:introduction}

\begin{quote}
\textit{``Odd,'' agreed Reg. ``I've certainly never come across any irreversible mathematics involving sofas. Could be a new field. Have you spoken to any spatial geometricians?''}

\hfill ---Douglas Adams, ``Dirk Gently's Holistic Detective Agency'' \\[13pt]
\end{quote}

\subsection{Background: the moving sofa problem}

The humorist and science-fiction writer Douglas Adams, whose 1987 novel \cite{adams} featuring a sofa stuck in a staircase is quoted above, was not the first to observe that the geometric intricacies of moving sofas around corners and other obstacles, familiar from our everyday experience, raise some challenging mathematical questions. In 1966, the mathematician Leo Moser asked~\cite{moser} the following curious question, which came to be known as the \textbf{moving sofa problem}:

\begin{quote}
\textit{What is the planar shape of maximal area that can be moved around a right-angled corner in a hallway of unit width?}
\end{quote}

Fifty years later, the problem is still unsolved. Thanks to its whimsical nature and the surprising contrast between the ease of stating and explaining the problem and the apparent difficulty of solving it, it has been mentioned in several books \cite{unsolved-problems, finch, stewart, weisstein}, 
has dedicated pages describing it on Wikipedia \cite{wikipedia} and Wolfram MathWorld \cite{mathworld}, 
and, especially in recent years, has been a popular topic for discussion online in math-themed blogs \cite{baez1, baez2, goucher, pachter, ross} and online communities \cite{reddit, mathoverflow}. 
(As further illustrations of its popular appeal, the moving sofa problem is the first of three open problems mentioned on the back cover of Croft, Falconer and Guy's book \cite{unsolved-problems} on $148$ unsolved problems in geometry, and is currently the third-highest-voted open problem from among a list of 99 ``not especially famous, long-open problems which anyone can understand'' compiled by participants of the online math research community \nobreak{MathOverflow}~\cite{mathoverflow}.)
However, as those who have studied it can appreciate, and as we hope this paper will convince you, the problem is far more than just a curiosity, and hides behind its simple statement a remarkable amount of mathematical structure and depth.

Fig.~\ref{fig:semicircle-hammersley}(a) shows two trivial examples of shapes that can be moved around a corner --- a unit square (with area $1$) and a semicircle of unit radius (with area $\pi/2 \approx 1.57$). The latter example is more interesting, since moving the semicircle around the corner requires translating it, then rotating it by an angle of $\pi/2$ radians, then translating it again, whereas moving the square involves only translations. By \emph{simultaneously} performing translations and rotations it is not hard to improve on these trivial constructions, and indeed the key to maximizing the area is to find the precise sequence that combines those two rigid motions in the optimal way.

\begin{figure}[h]
\begin{center}
\begin{tabular}{cc}
\scalebox{0.4}{\includegraphics{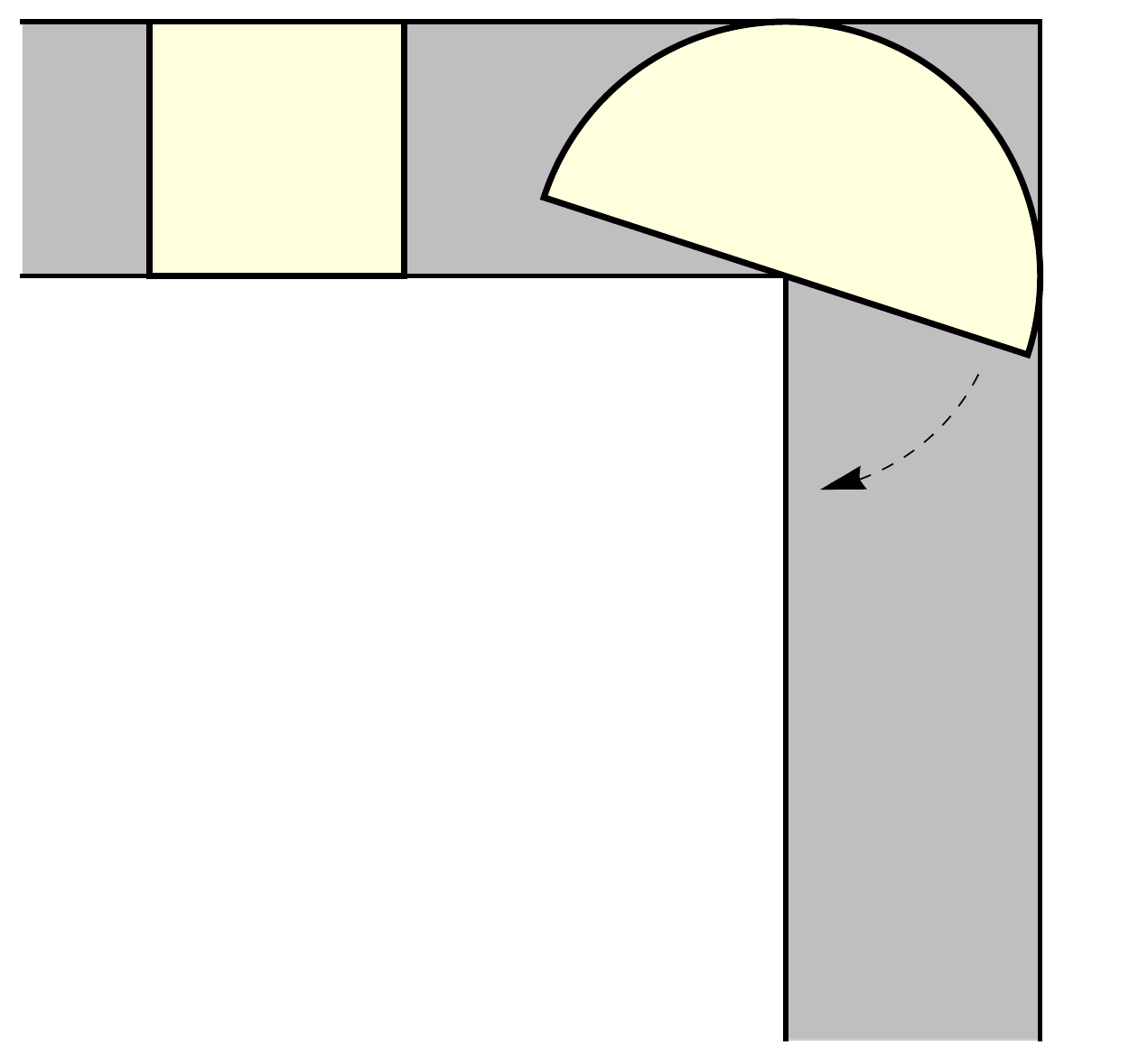}}
&
\raisebox{30pt}{\scalebox{0.6}{\includegraphics{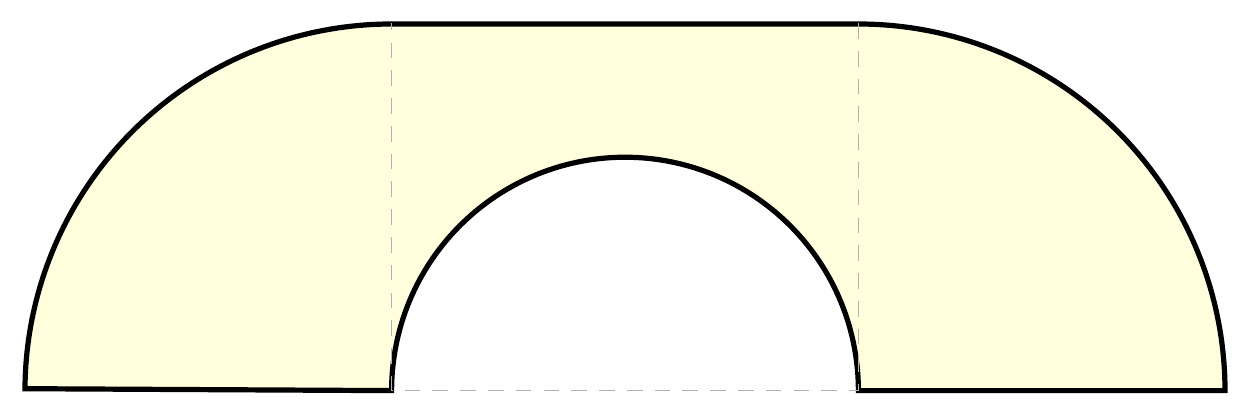}}}
\\(a)&(b)
\end{tabular}
\caption{(a) The $L$-shaped hallway and two trivial sofa shapes that can move around a corner; (b)~Hammersley's sofa.}
\label{fig:semicircle-hammersley}
\end{center}
\end{figure}

It is known that a shape of maximal area in the moving sofa problem exists (a result attributed to Conway and M.~Guy \cite{unsolved-problems}, though Gerver's proof in \cite{gerver} is the only one we are aware of that appeared in print). Hammersley in 1968 showed that the maximal area is at most $2\sqrt{2}\approx 2.828$ \cite{hammersley} (see also \cite{stewart, wagner}), and proposed a shape of area $\frac{\pi}{2}+\frac{2}{\pi} \approx 2.2074$ comprising two unit radius quarter-circles separated by a rectangular block of dimensions $\frac{4}{\pi}\times 1$ with a semicircular piece of radius $\frac{2}{\pi}$ removed (Fig.~\ref{fig:semicircle-hammersley}(b)). He conjectured this shape to be optimal, but this was discovered to be false when constructions of slightly larger area were discovered \cite{guy-monthly}. In 1992, Gerver \cite{gerver} proposed a considerably more complicated shape that can move around the corner, whose boundary comprises 3 straight line segments and 15 distinct curved segments, each described by a separate analytic expression; see Fig.~\ref{fig:intro-gerver}. (As recounted by Stewart in \cite{stewart}, the same solution had been found earlier in 1976 by B.~F.~Logan but never published.) Gerver conjectured that his proposed shape, derived from considerations of local optimality and having area $2.21953166\ldots$, has maximal area, and to date no constructions with larger area have been found.

\begin{figure}
\begin{center}
\scalebox{0.6}{\includegraphics{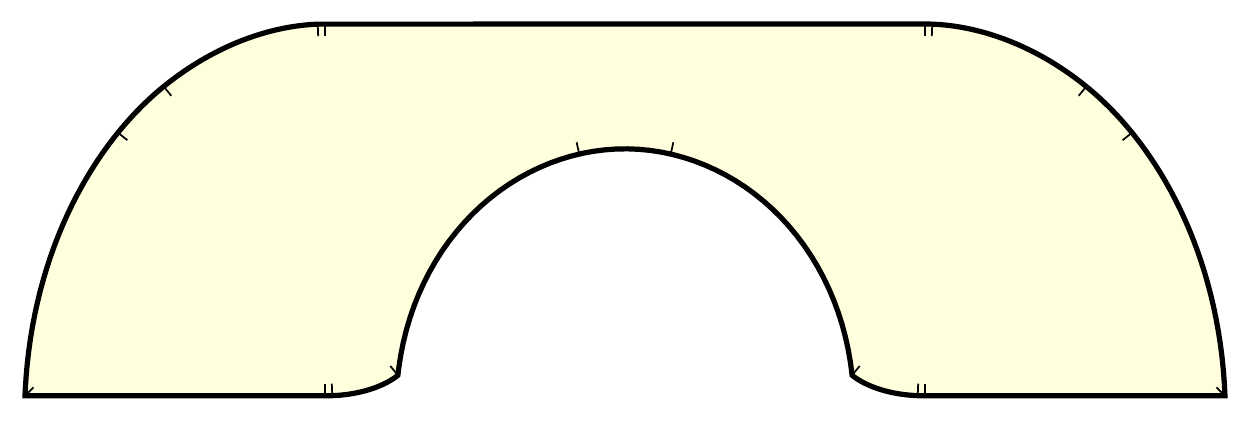}}
\caption{Gerver's sofa. The tick marks delineate the transition points between distinct pieces of the boundary --- $3$ straight line segments and $15$ curved pieces, each of which is described by a separate analytic expression.}
\label{fig:intro-gerver}
\end{center}
\end{figure}

It is worth noting that Gerver's description of his shape is not fully explicit, in the sense that the analytic formulas for the curved pieces of the shape are given in terms of four numerical constants $A$, $B$, $\varphi$ and $\theta$ (where $0<\varphi<\theta<\pi/4$ are angles with a certain geometric meaning), which are defined only implicitly as solutions of the nonlinear system of four equations
\begin{align}
&A(\cos \theta-\cos\varphi)-2B\sin \varphi
\nonumber \\ & \qquad\qquad\qquad +(\theta-\varphi-1)\cos\theta-\sin\theta+\cos\varphi+\sin\varphi &=0, \qquad\  \label{eq:gerver-nonlinear1} \\
&A(3\sin\theta+\sin\varphi)-2B\cos\varphi
\nonumber \\ & \qquad\qquad\qquad+3(\theta-\varphi-1)\sin\theta+3\cos\theta-\sin\varphi+\cos\varphi
\!\!\!&=0, \qquad \ 
\label{eq:gerver-nonlinear2} \\
&A\cos\varphi-(\sin\varphi+\tfrac12-\tfrac12\cos\varphi+B\sin\varphi)&=0, \qquad \ 
\label{eq:gerver-nonlinear3} \\
&(A+\tfrac12\pi-\varphi-\theta)-\big((B-\tfrac12(\theta-\varphi)(1+A)-\tfrac14(\theta-\varphi)^2\big)&=0.\qquad \ 
\label{eq:gerver-nonlinear4} 
\end{align}
Actually these equations are linear in $A$, $B$, so these auxiliary variables can be eliminated, leaving just two transcendental equations for the angles $\varphi$ and $\theta$, but this is as much of a simplification as one can get, and even then, Gerver's shape, its area, and other interesting quantities associated with it (such as its length from left to right, the arc length of its boundary, and the angles $\varphi$ and $\theta$ themselves) cannot be expressed in closed form. 

\subsection{Main result: the ambidextrous moving sofa problem}

Around the time that the moving sofa problem was first published, John H. Conway, G.~C.~Shepard and several other mathematicians were said to have worked on the problem during a geometry conference, as well as on several other variants of the problem, each of which was apparently assigned to one member of the group  \cite{stewart}. We now consider one of these variants, which asks for the planar shape of maximal area that can negotiate right-angled turns \emph{both to the right and to the left} in a hallway of width $1$. We refer to this as the \textbf{ambidextrous moving sofa problem}. The Conway et al.~early attack led to two rough guesses about the approximate shape of the solution, nicknamed the ``Conway car'' and ``Shepard piano'' (Fig.~\ref{fig:maruyama-gibbs}(a)--(c)).
The problem was considered again in 1973 by Maruyama \cite{maruyama}, who developed a numerical scheme for computing polygonal approximations to the problem and several other variants (Fig.~\ref{fig:maruyama-gibbs}(d)). 
More recently, in a 2014 paper Gibbs \cite{gibbs} developed another numerical technique to study the problem, and computed a similar-looking shape (in much higher resolution), whose area he calculated to be approximately $1.64495$. Gibbs' shape is shown in Fig.~\ref{fig:maruyama-gibbs}(e).

\begin{figure}
\begin{center}
\begin{tabular}{ccc}
\scalebox{0.1}{\includegraphics{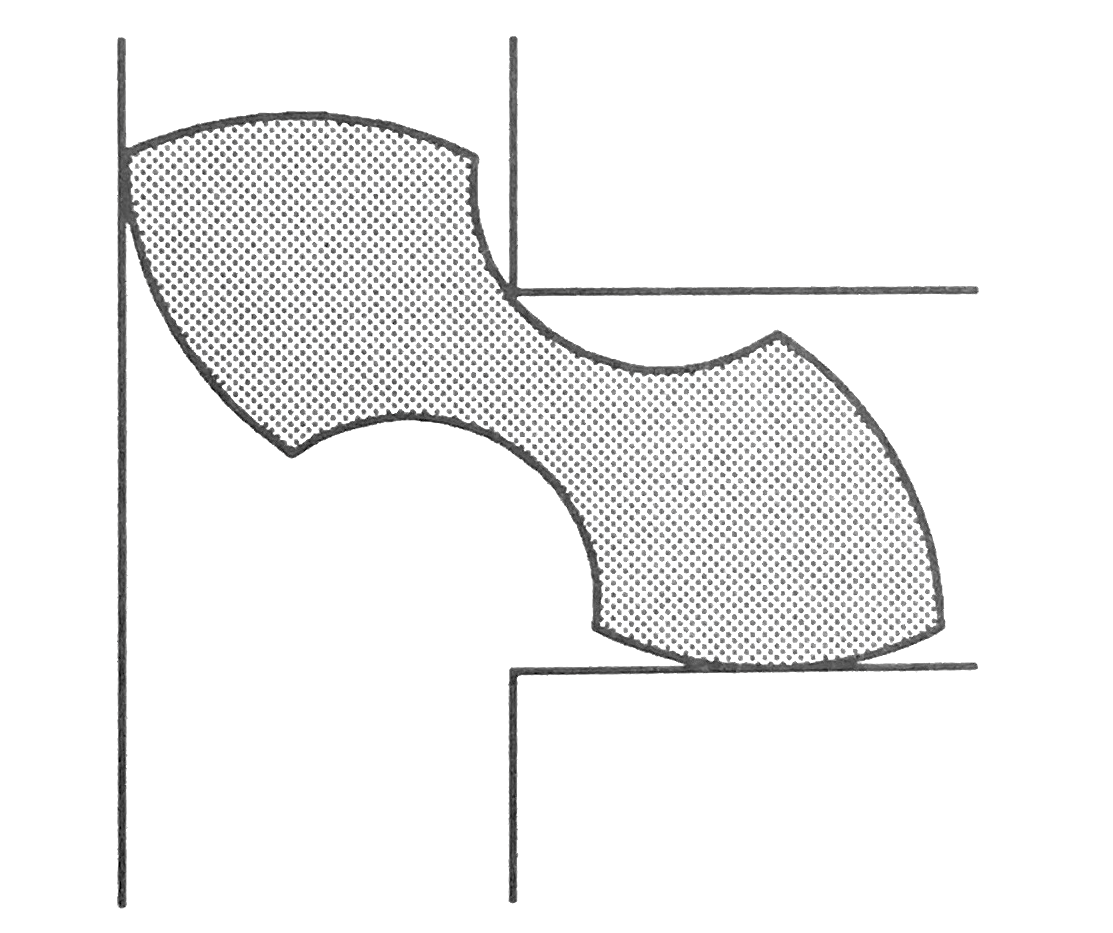}} 
& 
\scalebox{0.1}{\includegraphics{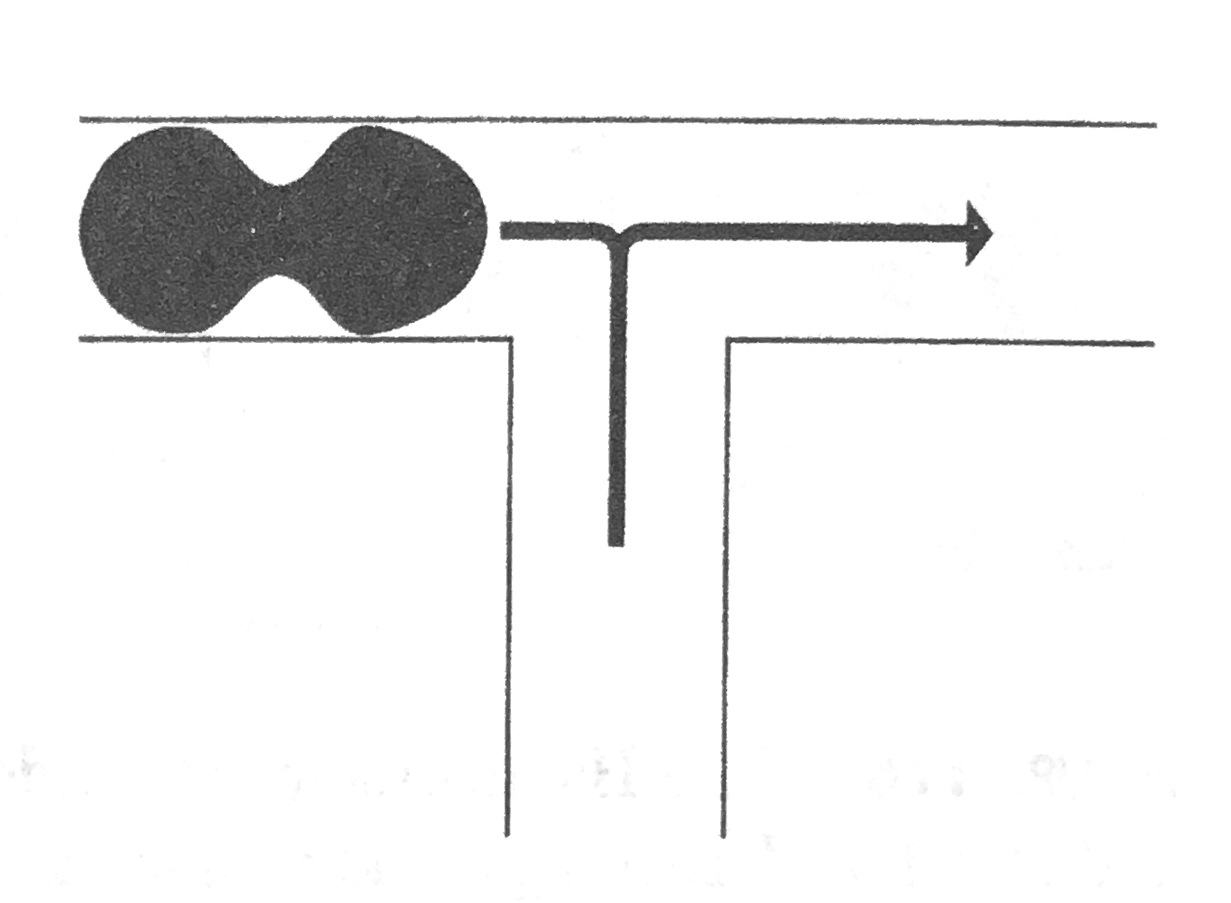}} 
&
\scalebox{0.1}{\includegraphics{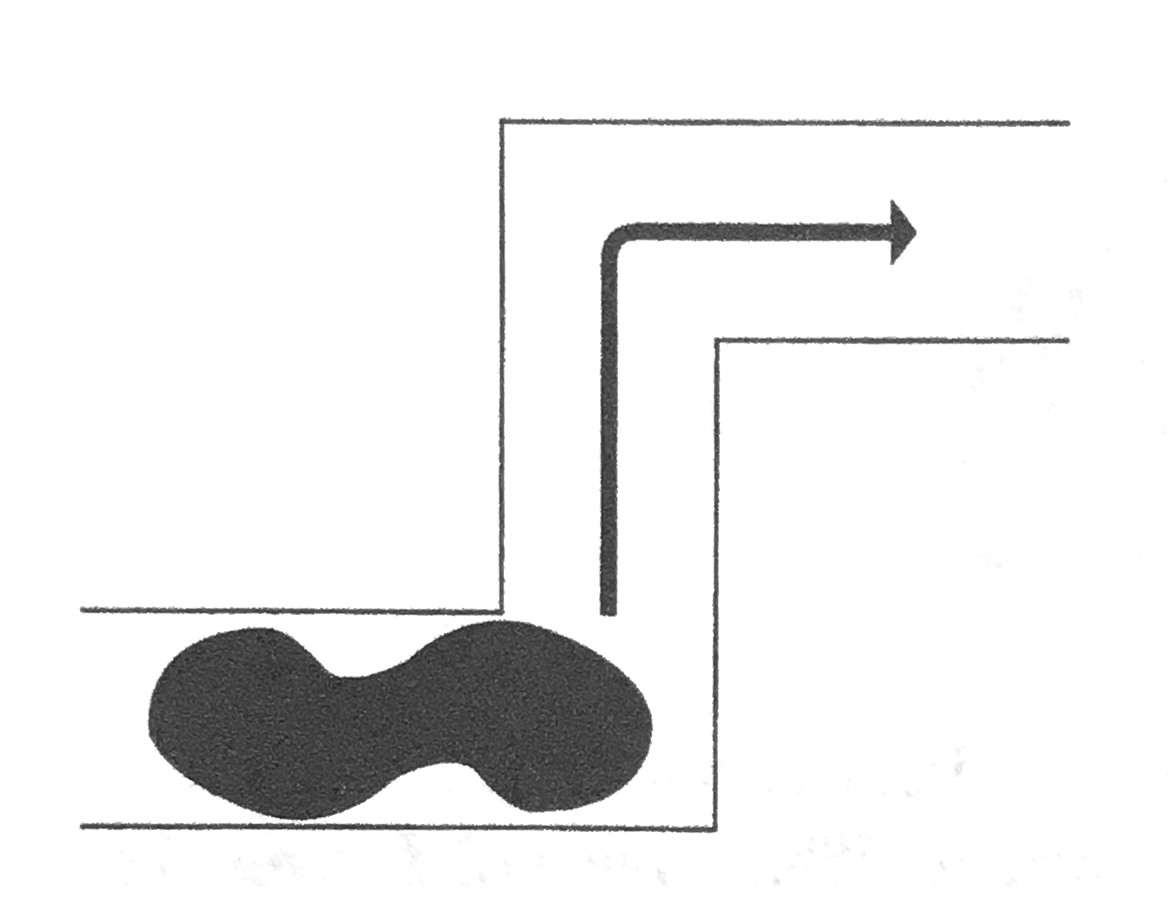}} 
\\ (a) & (b) & (c)
\end{tabular}
\begin{tabular}{cc}
\scalebox{0.32}{\includegraphics{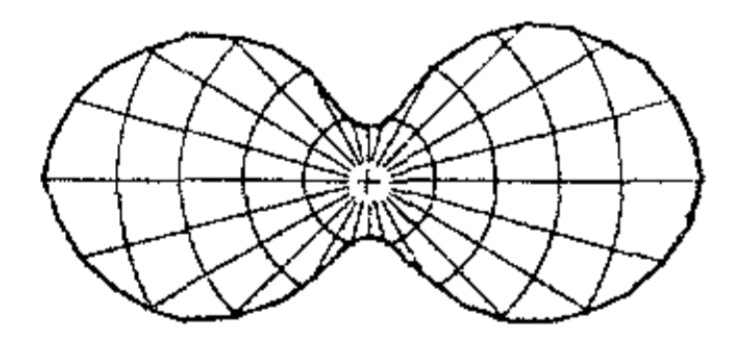}} 
& 
\scalebox{0.105}{\includegraphics{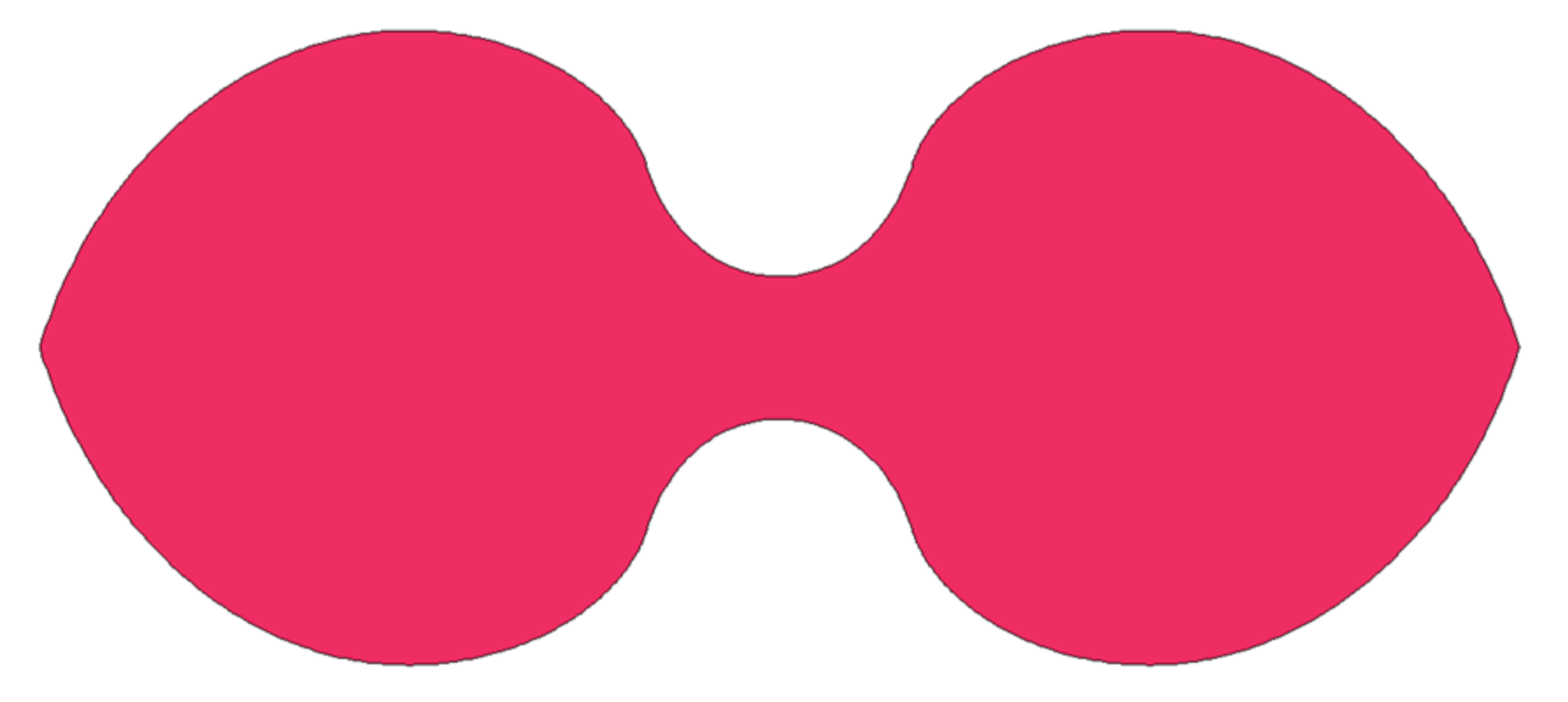}} 
\\
\!\!(d) & (e)
\end{tabular}
\caption{
Five graphic figures reproduced from \cite{unsolved-problems, gibbs, maruyama, stewart} showing past attempts to attack the ambidextrous moving sofa problem and another version of it asking for the largest shape that can turn completely around in a T-junction (the answer to both versions may be the same): (a) a shape dubbed the ``Conway car'' in \cite{unsolved-problems}; (b) another shape from \cite{stewart}, also referred to as the Conway car; (c) another shape from \cite{stewart}, where it is referred to as the ``Shepard piano''; (d) an approximate shape computed numerically by Maruyama \cite{maruyama}; (e) another approximate polygonal shape computed numerically by Gibbs~\cite{gibbs}.
}
\label{fig:maruyama-gibbs}
\end{center}
\end{figure}

Our main result is the construction of a precisely-defined shape that satisfies the conditions of an ambidextrous moving sofa (i.e., it can move around corners to the left and to the right) and is derived from considerations of local optimality analogous to Gerver's shape, and hence is a plausible candidate to be the solution to the ambidextrous moving sofa problem. Our shape, shown in Fig.~\ref{fig:intro-ambidextrous},
appears visually indistinguishable from Gibbs' numerically computed approximate shape. Its boundary comprises 18 distinct segments, each given by a separate explicit formula. Moreover, to our surprise we found that, unlike the case of Gerver's sofa, the new shape and all of its associated parameters can be described in closed form, and its boundary segments are all pieces of algebraic curves (see Fig.~\ref{fig:algebraic-curves} in Section~\ref{sec:geom-alg-properties}). In particular, its area is given by the rather unusual explicit constant
\begin{align}
&\sqrt[3]{3+2 \sqrt{2}}+\sqrt[3]{3-2 \sqrt{2}}-1 
+\arctan\left[
\frac{1}{2} \left( \sqrt[3]{\sqrt{2}+1}- \sqrt[3]{\sqrt{2}-1}\, \right)
  \right]
\nonumber \\ & \qquad\qquad\qquad \approx 1.644955218425440,
\label{eq:ambi-area-closedform}
\end{align}
a result which is nicely in accordance with Gibbs' earlier numerical prediction. The left-to-right length of our new shape is 
\begin{equation}
\label{eq:ambi-length-closedform}
\frac{2}{3} \sqrt{4+\sqrt[3]{71+8 \sqrt{2}}+\sqrt[3]{71-8 \sqrt{2}}}
\approx 2.334099633100619,
\end{equation}
an algebraic number of degree $6$.
Section~\ref{sec:ambidextrous}, where the details of our derivation are given, contains additional curious formulas of this sort, and Section~\ref{sec:geom-alg-properties} has a further discussion of geometric and algebraic properties of the shape.

\begin{figure}
\begin{center}
\scalebox{0.8}{\includegraphics{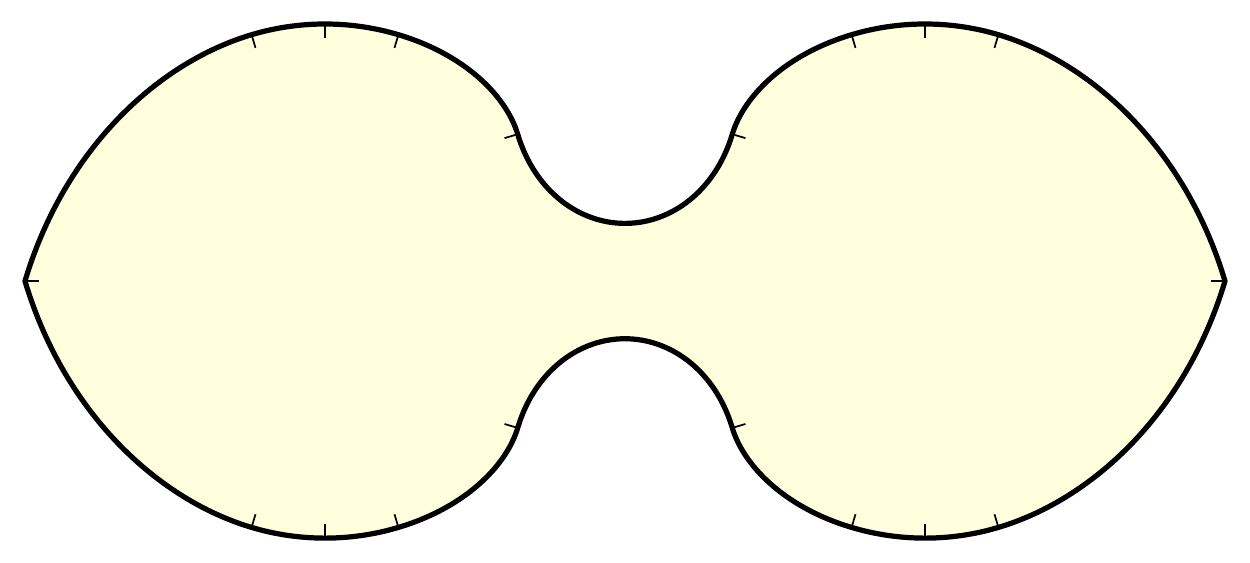}}
\caption{Our new analytic construction in the ambidextrous moving sofa problem. The tick marks show the subdivision of the boundary of the shape into 18 distinct curves, each described by a separate analytic expression.}
\label{fig:intro-ambidextrous}
\end{center}
\end{figure}

\subsection{Additional results}

On the way to deriving the new shape, this paper makes several additional contributions to the understanding of the moving sofa problem and its variants. A main new advance in the theory is the development of a general framework extending and generalizing Gerver's ideas. Two key elements of this new framework are: 

\begin{itemize}
\item First, we define a convenient terminology and notation that parametrizes candidate sofa shapes in terms of the so-called \textbf{rotation path}, which is the path traversed by the inner corner of the hallway as it slides and rotates around the shape in a sofa-centric frame of reference in which the shape stays fixed and it is the hallway that moves and rotates. We give an explicit description of this parametrization by deriving formulas for the boundary of the shape associated with a given rotation path. These ideas are described in Section~\ref{sec:rotation-paths}.

\item Second, by starting with Gerver's key insight that a shape moving around a corner can have maximal area only if it is a limit of polygonal shapes satisfying a certain geometric condition (what Gerver refers to as a \textbf{balanced polygon}) and reconsidering it from our new point of view, we rework Gerver's balance condition into a family of six ordinary differential equations. We show that it is a necessary condition for the rotation path associated with an area-maximizing moving sofa shape to satisfy these ODEs, subject to certain mild assumptions.
The ODEs are derived and discussed in Section~\ref{sec:odes}.
\end{itemize}

After developing this new framework, we illustrate its applicability by first giving a new and conceptually quite simple derivation of Gerver's shape, which consists of writing down an intuitive and easy-to-understand system of equations for gluing together solutions to five of the six ODEs. The equations are then easily solved by a computer using the symbolic math software application \texttt{Mathematica}. This is discussed in Section~\ref{sec:gerver-rederivation}. In Section~\ref{sec:ambidextrous} we then show how the same ideas can then be applied with slight modifications to derive the new shape in the ambidextrous moving sofa problem.

Several of the proofs in this paper rely on numerical and algebraic computations that can be performed by a computer using symbolic math software. We prepared a \texttt{Mathematica} software package, \texttt{MovingSofas}, as a companion package to this paper to aid the reader in the verification of a few of the claims~\cite{romik-sofa-mathematica}. The software package also includes interactive graphical visualizations and video animations that can greatly enhance the intuitive understanding of the geometry of shapes moving around a corner; see also~\cite{romik-sofa-web}. 

\subsection*{Acknowledgements}

The author wishes to thank Greg Kuperberg, Alexander Coward, Alexander Holroyd, James Martin and Anastasia Tsvietkova for helpful conversations and suggestions during the work described in this paper. The author was supported by the National Science Foundation under grant DMS-0955584.

\section{Rotation paths, contact points and contact paths}

\label{sec:rotation-paths}

We equip the $L$-shaped hallway and its two arms with coordinates by denoting
\begin{align*}
\armh &= \{(x,y)\in\R^2\,:\, x\le 1,\ 0\le y\le 1 \}, \\
\armv &= \{(x,y)\in\R^2\,:\, y\le 1,\ 0\le x\le 1 \}, \\
\hallway &= \armh\cup\armv.
\end{align*}
The moving sofa problem considers shapes that undergo a rigid motion (a combination of rotations and translations) to move continuously from $\armh$ into $\armv$, while staying within $\hallway$. In this paper we further restrict our attention to shapes that, while being translated, rotate monotonically from an angle of $0$ radians to an angle of $\pi/2$ radians. 
It has not been shown rigorously, but seems extremely plausible, that the optimal shape can be transported around the corner using a rigid motion of this type.

It will be convenient to also keep in mind (as several earlier authors have done) a dual point of view from the frame of reference of the sofa, in which it is the hallway being rotated and translated whereas the sofa stays in a fixed place. From this point of view we see that a shape $S$ can be moved around the corner if it satisfies the condition
\begin{equation} \label{eq:hallway-containment}
S \subseteq \armh \cap \bigcap_{0\le t\le \pi/2} \Big(\mathbf{x}(t)+\rotmat{t}(\hallway)\Big)
\cap \Big(\mathbf{x}(\pi/2)+\rotmat{\pi/2}(\armv) \Big),
\end{equation}
where we denote by $\rotmat{t}$ the rotation matrix 
$$\rotmat{t}=\begin{pmatrix} \cos t&-\sin t\\\sin t&\cos t\end{pmatrix},$$ and where $\mathbf{x}:[0,\pi/2]\to\R^2$ is a continuous path satisfying $\mathbf{x}(0)=(0,0)^\top$.\footnote{Throughout the paper, we denote vectors in $\R^2$ with boldface letters and consider them as column vectors.} The path~$\mathbf{x}$ describes the motion of the inner corner $(0,0)$ of the hallway in the frame of reference of the sofa, with the time parameter being the angle of rotation of the hallway. We refer to any such path as a \textbf{rotation path}.

\begin{figure}
\begin{center}
\scalebox{0.4}{\includegraphics{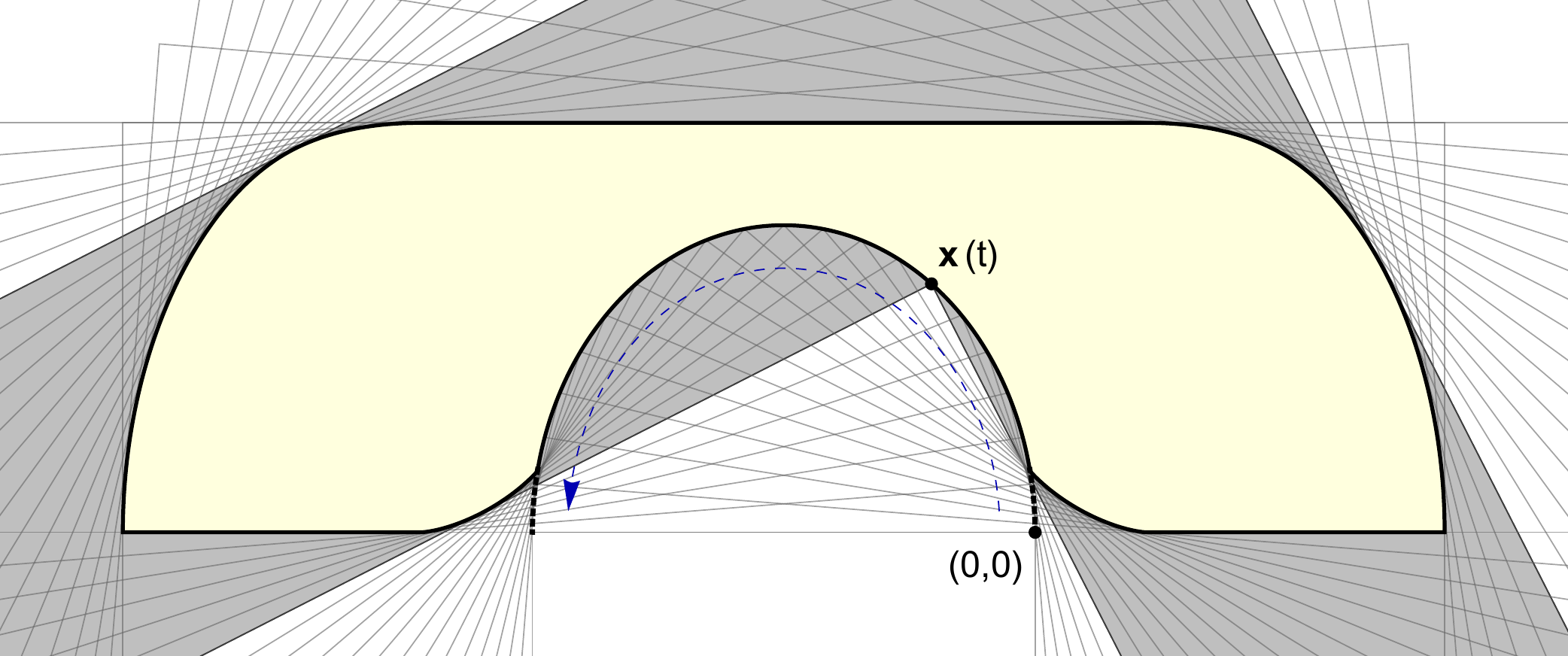}}
\caption{Constructing a shape from a rotation path.}
\label{fig:rotation-path}
\end{center}
\end{figure}

Now note that, given a rotation path $\mathbf{x}$, there is no loss of generality in considering only shapes $S$ such that the containment relation in \eqref{eq:hallway-containment} is actually an equality, since in the case of a strict containment one can replace the shape by a bigger one with a larger (or equal) area. We therefore define
\begin{equation} \label{eq:rotation-shape}
S_{\mathbf{x}} = \armh \cap \bigcap_{0\le t\le \pi/2} \Big(\mathbf{x}(t)+\rotmat{t}(\hallway)\Big)
\cap \Big(\mathbf{x}(\pi/2)+\rotmat{\pi/2}(\armv) \Big),
\end{equation}
and refer to this set as the shape associated with the rotation path $\mathbf{x}$; see Fig.~\ref{fig:rotation-path}. Parametrizing shapes in such a way in terms of their rotation paths, the problem is now reduced to identifying the rotation path whose associated shape has maximal area.

A natural question now arises of describing the map $\mathbf{x}\mapsto S_{\mathbf{x}}$ associating shapes to rotation paths. Giving a fully explicit description of this map seems challenging (and therein perhaps lies a key difficulty to solving the moving sofa problem) due to the complicated and rather subtle effect that a local change to the rotation path can have on several different parts of the shape $S_{\mathbf{x}}$. However, we can give a partial description that is valid for relatively simple rotation paths and is already quite useful. The key idea is to keep track of the \textbf{contact points}, which are the tangency points between the four walls of the hallway and the shape, in addition to the position $\mathbf{x}$ of the rotating hallway corner, which is also considered a contact point if it touches the shape $S_{\mathbf{x}}$. We label these additional tangency points $\mathbf{A}$, $\mathbf{B}$, $\mathbf{C}$ and $\mathbf{D}$, where $\mathbf{A}$ and $\mathbf{C}$ correspond to the outer walls and $\mathbf{B}$ and $\mathbf{D}$ correspond to the inner walls, as shown in Fig.~\ref{fig:contact-paths}. As the hallway slides and rotates, the tangency points of the four walls trace four paths, which we refer to as the \textbf{contact paths}, and denote by $\mathbf{A}(t)$, $\mathbf{B}(t)$, $\mathbf{C}(t)$ and $\mathbf{D}(t)$. Note that for any given value of $t$, one or both of the contact points $\mathbf{B}$ and $\mathbf{D}$ may not exist, and $\mathbf{x}$ may or may not be a contact point. We denote by $\contact_\mathbf{x}(t)$ the set of contact points; for example, in the situation shown in Fig.~\ref{fig:contact-paths} we have $\contact_\mathbf{x}(t)=\{\mathbf{x}, \mathbf{A}, \mathbf{B}, \mathbf{C}, \mathbf{D} \}$. 

Note also that for certain choices of rotation paths and values of $t$, one or both of the inner hallway walls may have more than one tangency point with the shape, in which case our notation $\mathbf{B}$ and $\mathbf{D}$ breaks down. We do not consider such more complicated situations. 

\begin{figure}
\begin{center}
\scalebox{0.4}{\includegraphics{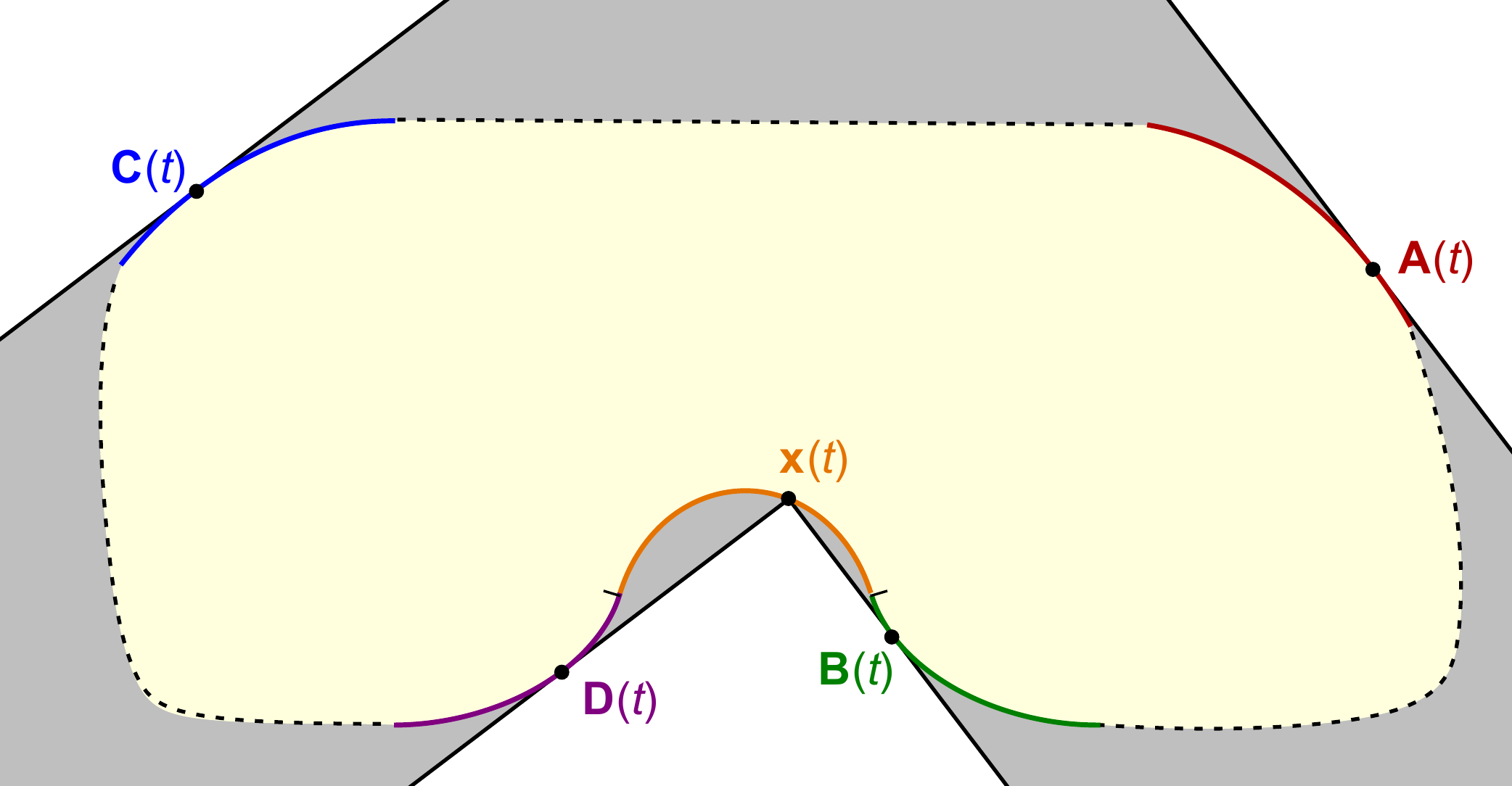}}
\caption{The contact points and contact paths $\mathbf{x}(t), \mathbf{A}(t)$, $\mathbf{B}(t)$, $\mathbf{C}(t)$, $\mathbf{D}(t)$.}
\label{fig:contact-paths}
\end{center}
\end{figure}

Denote
$$
\nvecone_t = \begin{pmatrix}\cos t\\\sin t\end{pmatrix} = \rotmat{t} \begin{pmatrix}1\\0\end{pmatrix},
\qquad
\nvectwo_t = 
\begin{pmatrix}-\sin t\\\cos t\end{pmatrix} = \rotmat{t} \begin{pmatrix}0\\1\end{pmatrix},
$$
a rotating orthonormal frame.
Our first result gives an explicit description of the contact paths $\mathbf{A}, \mathbf{B}, \mathbf{C}, \mathbf{D}$ in terms of the rotation path $\mathbf{x}$, under the assumptions described above.

\begin{thm} 
\label{thm:contact-paths}
We have the relations
\begin{align}
\mathbf{A}(t) &= \mathbf{x}(t) + \Big\langle \mathbf{x}'(t),\nvecone_t \Big\rangle \,\nvectwo_t + \nvecone_t, 
\label{eq:contact-pathA} \\[3pt]
\mathbf{B}(t) &= \mathbf{x}(t) + \Big\langle \mathbf{x}'(t), \nvecone_t \Big\rangle \, \nvectwo_t,
\label{eq:contact-pathB} \\[3pt]
\mathbf{C}(t) &= \mathbf{x}(t) - \Big\langle \mathbf{x}'(t), \nvectwo_t \Big\rangle \, \nvecone_t + \nvectwo_t 
\label{eq:contact-pathC} \\[3pt]
\mathbf{D}(t) &= \mathbf{x}(t) - \Big\langle \mathbf{x}'(t), \nvectwo_t \Big\rangle \, \nvecone_t,
\label{eq:contact-pathD}
\end{align}
which are valid whenever the respective contact points are defined, the respective contact path is continuous at $t$ and $\mathbf{x}$ is differentiable at $t$.
\end{thm}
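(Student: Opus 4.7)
The plan is to prove the formulas by the envelope method. In the sofa frame, the four walls of the translated-and-rotated hallway form, as $t$ varies, four one-parameter families of straight lines, and on each family the portion of $\partial S_\mathbf{x}$ swept out by wall contacts coincides with the envelope of the family; the contact point on the current wall is precisely where that wall meets this envelope. So my strategy is: parametrize each wall as $\{\mathbf{x}(t) + \delta \mathbf{u}_t + s \mathbf{v}_t : s \in \mathbb{R}\}$ (with $\delta \in \{0,1\}$ depending on whether the wall is inner or outer, and $(\mathbf{u}_t, \mathbf{v}_t)$ equal to $(\nvecone_t, \nvectwo_t)$ or $(\nvectwo_t, \nvecone_t)$ depending on which wall), write the contact point as $\mathbf{x}(t) + \delta \mathbf{u}_t + \sigma(t) \mathbf{v}_t$ for an unknown scalar $\sigma(t)$, and solve for $\sigma(t)$ from the envelope condition.

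I will carry out the calculation for $\mathbf{B}(t)$ in detail and indicate the straightforward modifications for the other three. The inner wall containing $\mathbf{B}(t)$ passes through $\mathbf{x}(t)$ with direction $\nvectwo_t$, so write $\mathbf{B}(t) = \mathbf{x}(t) + \sigma(t)\nvectwo_t$. Because this wall is a support line of $S_\mathbf{x}$ at $\mathbf{B}(t)$, the tangent to $\partial S_\mathbf{x}$ there is $\nvectwo_t$, and since $\mathbf{B}(t)$ slides along $\partial S_\mathbf{x}$ as $t$ varies, the velocity $\mathbf{B}'(t)$ must be parallel to $\nvectwo_t$. Differentiating using $\nvectwo_t' = -\nvecone_t$ gives
\[
\mathbf{B}'(t) = \mathbf{x}'(t) + \sigma'(t)\nvectwo_t - \sigma(t)\nvecone_t,
\]
and demanding that its $\nvecone_t$-component vanish yields $\sigma(t) = \langle \mathbf{x}'(t), \nvecone_t \rangle$, which is exactly \eqref{eq:contact-pathB}. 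For $\mathbf{A}$ the parametrization is shifted by $\nvecone_t$, producing an extra $\nvectwo_t$ term in the velocity (from $\nvecone_t' = \nvectwo_t$) that leaves the $\nvecone_t$-component unchanged, so the same $\sigma$ emerges and adding back the shift reproduces \eqref{eq:contact-pathA}. For $\mathbf{C}$ and $\mathbf{D}$ the roles of $\nvecone_t$ and $\nvectwo_t$ are interchanged; because $\nvecone_t' = +\nvectwo_t$ rather than $\nvectwo_t' = -\nvecone_t$, the analogous computation produces the opposite sign, yielding \eqref{eq:contact-pathC} and \eqref{eq:contact-pathD}.

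The main difficulty is justifying that $\mathbf{B}'(t)$ really is tangent to $\partial S_\mathbf{x}$ at $\mathbf{B}(t)$, which presupposes both differentiability of the contact path and that the boundary tangent there coincides with the wall direction. The latter is automatic because the wall is a support line, so its unit normal $\nvecone_t$ must be the outward unit normal to $\partial S_\mathbf{x}$ at any smooth contact point. To avoid imposing additional regularity on the contact path itself, I would sidestep the differentiation of $\mathbf{B}$ altogether and appeal to the classical envelope characterization: writing the wall as the zero set of $F(\mathbf{p}, t) = \langle \mathbf{p} - \mathbf{x}(t), \nvecone_t \rangle$, the envelope is defined by $F = F_t = 0$, and computing $F_t = 0$ gives $\langle \mathbf{p} - \mathbf{x}(t), \nvectwo_t \rangle = \langle \mathbf{x}'(t), \nvecone_t \rangle$, which together with $F = 0$ pins down $\mathbf{p} = \mathbf{x}(t) + \langle \mathbf{x}'(t), \nvecone_t \rangle \nvectwo_t$. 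The uniqueness-of-contact-point assumption stated just before the theorem then forces the actual contact point to coincide with this envelope point, completing the argument.
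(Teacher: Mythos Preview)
Your approach is correct and is essentially the same as the paper's: both compute the contact point as the envelope of the one-parameter family of wall lines. The paper carries this out by intersecting the line $\ell_t$ with a nearby line $\ell_{t+\delta}$ and sending $\delta\to 0$, which is exactly the limit underlying your $F=F_t=0$ condition; the resulting linear equations and final formulas are identical.

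One small point worth tightening: your closing sentence invokes the \emph{uniqueness}-of-contact-point assumption to identify the envelope point with the actual contact point, but uniqueness alone does not do this (the envelope point is a priori just a point on the line, not obviously on $\partial S_\mathbf{x}$). The hypothesis you want is precisely the \emph{continuity} of the contact path stated in the theorem: since $\mathbf{B}(s)\in\ell_s$ for each $s$ and $\mathbf{B}(s)\to\mathbf{B}(t)\in\ell_t$, the contact point is trapped at the limiting intersection of nearby lines, i.e.\ at the envelope point. This is exactly how the paper justifies the step, and it explains why the continuity hypothesis appears in the statement.
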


\begin{proof} 
By definition, the contact point $\mathbf{A}(t)$ lies on the line
$$
\ell_t := \left\{ \mathbf{p}\,:\,
\Big\langle \mathbf{p}, \nvecone_t \Big\rangle = \Big\langle \mathbf{x}(t), \nvecone_t \Big\rangle +1
\right\}.
$$
Denote $s=t+\delta$, where $\delta$ is a small positive number.
Let $\mathbf{p}(t,s)$ denote the point at the intersection of the lines $\ell_t$ and $\ell_s$. We will derive \eqref{eq:contact-pathA} starting from the fact that
$$ \mathbf{A}(t) = \lim_{\delta\to 0} \mathbf{p}(t,s), $$
which is immediate from the assumption that $\mathbf{A}(\cdot)$ is continuous at $t$. To this end, note that $\mathbf{p}(t,s)$ satisfies
\begin{align}
\Big\langle \mathbf{p}(t,s), \nvecone_t \Big \rangle &= \Big\langle \mathbf{x}(t), \nvecone_t \Big \rangle+1, 
\label{eq:ptdelta1}
\\
\Big\langle \mathbf{p}(t,s), \nvecone_s \Big \rangle &= \Big\langle \mathbf{x}(s), \nvecone_s \Big \rangle+1.
\label{eq:ptdelta2}
\end{align}
Furthermore, $\nvecone_s$ can be represented as
$$
\nvecone_s = \cos \delta \cdot \nvecone_t + \sin \delta \cdot \nvectwo_t,
$$
which allows us to rewrite \eqref{eq:ptdelta2} as
$$
\cos \delta \Big\langle \mathbf{p}(t,s), \nvecone_t \Big \rangle 
+ \sin \delta \Big\langle \mathbf{p}(t,s), \nvectwo_t \Big \rangle
= 
\cos \delta \Big\langle \mathbf{x}(s), \nvecone_t \Big \rangle
+\sin \delta \Big\langle \mathbf{x}(s), \nvectwo_t \Big \rangle
+1.
$$
Using \eqref{eq:ptdelta1}, this equation transforms into
$$
\sin \delta \Big\langle \mathbf{p}(t,s), \nvectwo_t \Big \rangle 
= 
\cos \delta \Big\langle \mathbf{x}(s)-\mathbf{x}(t), \nvecone_t \Big \rangle
+\sin \delta \Big\langle \mathbf{x}(s), \nvectwo_t \Big \rangle.
$$
Dividing by $\delta$ and taking the limit as $\delta\to 0$, we get that
$$
\Big\langle \mathbf{A}(t), \nvectwo_t \Big\rangle = \Big\langle \mathbf{x}(t), \nvectwo_t \Big\rangle
+ \Big\langle \mathbf{x}'(t), \nvecone_t \Big\rangle.
$$
Combining this with the defining relation 
$$\Big\langle \mathbf{A}(t), \nvecone_t \Big\rangle = 
\Big\langle \mathbf{x}(t), \nvecone_t \Big\rangle + 1,
$$
we have two linear equations giving the orthonormal projections of $\mathbf{A}(t)$ in the directions $\nvecone_t$ and $\nvectwo_t$. It is immediate to see that the expression on the right-hand side of \eqref{eq:contact-pathA} satisfies those equations and hence is the correct expression for $\mathbf{A}(t)$. This proves \eqref{eq:contact-pathA}. The proof of the remaining equations \eqref{eq:contact-pathB}--\eqref{eq:contact-pathD} is similar and left to the reader.
\end{proof}

\paragraph{Example: generalized Hammersley sofas.}  Fix $0\le r\le 1$, and consider a semicircular rotation path $\mathbf{x}^{(r)}(t) = r(\cos(2t)-1,\sin(2t))$ of radius $r$ traveling from $(0,0)$ to $(-2r,0)$. An easy computation using \eqref{eq:contact-pathA}--\eqref{eq:contact-pathD} shows that the associated contact paths 
$\mathbf{A}^{(r)}(t)$, $\mathbf{B}^{(r)}(t)$, $\mathbf{C}^{(r)}(t)$, $\mathbf{D}^{(r)}(t)$ are given by
\begin{align*} 
\mathbf{A}^{(r)}(t) & =(\cos t, \sin t)^\top, \\
\mathbf{B}^{(r)}(t) & =(0,0)^\top, \\
\mathbf{C}^{(r)}(t) & =(-2r, 0)^\top, \\
\mathbf{D}^{(r)}(t) & =(-2r - \sin t, \cos t)^\top.
\end{align*}
Thus, the contact paths and $\mathbf{B}$ and $\mathbf{D}$ are fixed at the corners of the semicircular hole traced out by the rotation path, and the contact paths $\mathbf{A}$ and $\mathbf{C}$ trace out two quarter-circles of unit radius. (These assertions are also easy to prove using elementary geometry.) The sofa shape $S^{(r)}=S_{\mathbf{x}^{(r)}}$ therefore consists of two unit quarter-circles separated by a $2r\times 1$ rectangular block from which a semicircular piece of radius $r$ has been removed. 
This family of shapes was considered by Hammersley, who noticed that the area $f(r) = \frac{\pi}{2}+r\left(2-\frac{\pi}{2}r\right)$ of the shape takes its maximum value at $r_* = \frac{2}{\pi}$, this maximum being equal to $f(r_*) = \frac{\pi}{2}+\frac{2}{\pi}\approx2.2074$. The shape $S^{(r_*)}$ was the one proposed by Hammersley as a possible solution to the moving sofa problem.

\bigskip
In addition to our assumptions about the contact paths being well-defined, we introduce another geometric assumption about the rotation path~$\mathbf{x}$. For $0<t<\pi/2$, we say that $\mathbf{x}$ is \textbf{well-behaved} at $t$ if $\mathbf{x}$ is twice continuously differentiable at $t$ (which by Theorem~\ref{thm:contact-paths} also implies that any of the contact paths $\mathbf{A}(t)$, $\mathbf{B}(t)$, $\mathbf{C}(t)$ and $\mathbf{D}(t)$  that are defined at $t$ are continuously differentiable there), and if the following conditions hold:
\begin{enumerate}
\item If $\mathbf{x}(t)$ is a contact point then $\big\langle \mathbf{x}'(t), \nvectwo_t \big\rangle \ge 0$ and $\big\langle \mathbf{x}'(t), \nvecone_t \big\rangle \le 0$.
\item If $\mathbf{A}(t)$ is defined then $\big\langle \mathbf{A}'(t), \nvectwo_t \big\rangle \ge 0$.
\item If $\mathbf{B}(t)$ is defined then $\big\langle \mathbf{B}'(t), \nvectwo_t \big\rangle \le 0$.
\item If $\mathbf{C}(t)$ is defined then $\big\langle \mathbf{C}'(t), \nvecone_t \big\rangle \le 0$.
\item If $\mathbf{D}(t)$ is defined then $\big\langle \mathbf{D}'(t), \nvecone_t \big\rangle \ge 0$.
\end{enumerate}

It seems highly plausible that the rotation path associated with an area-maximizing shape will automatically be well-behaved at all except a finite number of values of $t$ where second differentiability fails, but we did not attempt to prove this. As we shall see in the next section, the assumption will prove useful in simplifying the form of certain equations.

\section{A family of six differential equations}

\label{sec:odes}

A key insight due to Gerver in his paper \cite{gerver} is that an area-maximizing shape in the moving sofa problem must be the limit of polygonal shapes satisfying a certain condition, which he referred to as \textbf{balanced polygons}. He defined a polygon to be balanced if, for any side of the polygon, that side and all other sides that are parallel to it lie on one of two lines, such that the distance between the lines is $1$ and the total lengths of the sides lying on each of the two lines are equal. By passing from the polygonal scenario to the limit of a curved shape he was able to derive his sofa shape.

While this was an important breakthrough, Gerver's computations were of a somewhat ad hoc nature and seem rather narrowly focused on his immediate goal of deriving his specific shape. In this section we extend his method to arrive at more general and explicit conditions that must hold for an area-maximizing moving sofa shape, and which we believe shed important new light on the problem. One change in our point of view is that we formulate our analogous ``balancedness'' condition for smooth shapes in terms of the rotation path $\mathbf{x}$, which as we showed in the previous section can be used to conveniently parametrize the associated shape $S_{\mathbf{x}}$.
More precisely, our result is a family of six ordinary differential equations that the rotation path must satisfy in different phases of the motion of the shape around the corner, as explained in the following theorem.

\begin{thm}
\label{thm:odes}
Let $\mathbf{x}$ be a rotation path, with an associated shape $S_{\mathbf{x}}$, set $\contact_{\mathbf{x}}$ of contact points, and contact paths $\mathbf{A}, \mathbf{B}, \mathbf{C}, \mathbf{D}$ as described in the previous section. Let $t\in(0,\pi/2)$ be a point where $\mathbf{x}(t)$ is 
well-behaved. Assume that $\contact=\contact_\mathbf{x}$ remains constant in a neighborhood of $t$ and is given by one of the six possibilities listed below. Then a necessary condition for the shape $S_\mathbf{x}$ to be a solution to the moving sofa problem is that $\mathbf{x}$ satisfies at $t$ one of the following six differential equations, according to the different possibilities for the set of contact points.

\bigskip \noindent $\bullet$
\textbf{Case 1:} $\contact_\mathbf{x}(t)= \{ \mathbf{A}, \mathbf{C}, \mathbf{D} \}$.
\begin{align*}
\mathbf{x}''(t) &=
\rotmat{t}
\Bigg(
\begin{pmatrix}-1 \\-1/2 \end{pmatrix}
+
\begin{pmatrix}
2\sin t & -2\cos t \\ 2\cos t & 2\sin t
\end{pmatrix} \mathbf{x}'(t) \Bigg)
\label{eq:ode1}
\tag{ODE1}
\end{align*}

\medskip \noindent $\bullet$
 \textbf{Case 2:} $\contact_\mathbf{x}(t)= \{ \mathbf{x}, \mathbf{A}, \mathbf{C}, \mathbf{D} \}$.
\begin{align*}\mathbf{x}''(t) &=
\rotmat{t}
\Bigg(
\begin{pmatrix}-1 \\-1/2 \end{pmatrix}
+
\begin{pmatrix}
\sin t & -\cos t \\ \frac32\cos t & \frac32\sin t
\end{pmatrix} \mathbf{x}'(t) \Bigg) \ 
\label{eq:ode2}
\tag{ODE2}
\end{align*}

\medskip \noindent $\bullet$
\textbf{Case 3:} $\contact_\mathbf{x}(t)= \{ \mathbf{x}, \mathbf{A}, \mathbf{C} \}$.
\begin{align*}
\mathbf{x}''(t) &=
\rotmat{t}
\Bigg(
\begin{pmatrix}-1 \\-1 \end{pmatrix}
+
\begin{pmatrix}
\sin t & -\cos t \\ \cos t & \sin t
\end{pmatrix} \mathbf{x}'(t)
\Bigg)
\qquad
\label{eq:ode3}
\tag{ODE3}
\end{align*}

\medskip \noindent $\bullet$
\textbf{Case 4:} $\contact_\mathbf{x}(t)= \{ \mathbf{x}, \mathbf{A}, \mathbf{B}, \mathbf{C} \}$.
\begin{align*}
\mathbf{x}''(t) &=
\rotmat{t}
\Bigg( \begin{pmatrix}-1/2 \\-1 \end{pmatrix}
+
\begin{pmatrix}
\frac32\sin t & -\frac32\cos t \\ \cos t & \sin t
\end{pmatrix} \mathbf{x}'(t) \Bigg)
\label{eq:ode4}
\tag{ODE4}
\end{align*}

\medskip \noindent $\bullet$
\textbf{Case 5:} $\contact_\mathbf{x}(t)= \{ \mathbf{A}, \mathbf{B}, \mathbf{C} \}$.
\begin{align*}
\mathbf{x}''(t) &=
\rotmat{t}
\Bigg(
\begin{pmatrix}-1/2 \\-1 \end{pmatrix}
+
\begin{pmatrix}
2\sin t & -2\cos t \\ 2\cos t & 2\sin t
\end{pmatrix} \mathbf{x}'(t) \Bigg)
\label{eq:ode5}
\tag{ODE5}
\end{align*}

\medskip \noindent $\bullet$
\textbf{Case 6:} $\contact_\mathbf{x}(t)= \{ \mathbf{x}, \mathbf{A}, \mathbf{B}, \mathbf{C}, \mathbf{D} \}$.
\begin{align*}
\mathbf{x}''(t) &=
\rotmat{t}
\Bigg(
\begin{pmatrix}-1/2 \\-1/2 \end{pmatrix}
+
\begin{pmatrix}
\frac32\sin t & -\frac32\cos t \\[3pt] \frac32\cos t & \frac32\sin t
\end{pmatrix} \mathbf{x}'(t) \Bigg)
\label{eq:ode6}
\tag{ODE6}
\end{align*}
Moreover, in Case~1 we have additionally that $\mathbf{A}'(t)=\mathbf{0}$ (that is, the contact point $\mathbf{A}$ remains fixed during an interval when Case~1 is applicable), and similarly in Case~5 we have that $\mathbf{C}'(t)=\mathbf{0}$.
\end{thm}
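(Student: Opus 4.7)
The plan is a direct first-variation computation for the area functional $\text{Area}(S_\mathbf{x})$. Assume $\mathbf{x}$ yields an area-maximizing sofa, and take a smooth compactly-supported perturbation $\mathbf{x}\to\mathbf{x}+\epsilon\mathbf{h}$ with $\mathbf{h}$ supported in a small neighborhood of the chosen $t$. Because $\contact_\mathbf{x}$ is constant in such a neighborhood by hypothesis, the perturbed sofa has the same contact structure for $\epsilon$ small enough. The first variation can then be computed piece by piece along $\partial S_\mathbf{x}$ using the normal-displacement formula $\delta\text{Area}=\oint\langle\delta\gamma,\mathbf{n}\rangle\,ds$, and setting this to zero for all admissible $\mathbf{h}$ will yield, at each $t$, two scalar equations that rearrange into the stated vector ODE.

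First I would compute the data for each boundary piece. Differentiating Theorem~\ref{thm:contact-paths} using $\nvecone_t'=\nvectwo_t$ and $\nvectwo_t'=-\nvecone_t$ shows that every contact path moves along its wall: $\mathbf{A}'=\alpha_\mathbf{A}\nvectwo_t$, $\mathbf{B}'=-\alpha_\mathbf{B}\nvectwo_t$, $\mathbf{C}'=-\alpha_\mathbf{C}\nvecone_t$, $\mathbf{D}'=\alpha_\mathbf{D}\nvecone_t$, with arc-length speeds
\begin{align*}
\alpha_\mathbf{A} &= \langle\mathbf{x}'',\nvecone_t\rangle + 2\langle\mathbf{x}',\nvectwo_t\rangle + 1, & \alpha_\mathbf{B} &= -\langle\mathbf{x}'',\nvecone_t\rangle - 2\langle\mathbf{x}',\nvectwo_t\rangle, \\
\alpha_\mathbf{C} &= \langle\mathbf{x}'',\nvectwo_t\rangle - 2\langle\mathbf{x}',\nvecone_t\rangle + 1, & \alpha_\mathbf{D} &= -\langle\mathbf{x}'',\nvectwo_t\rangle + 2\langle\mathbf{x}',\nvecone_t\rangle,
\end{align*}
all non-negative by well-behavedness. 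The outward normals of $S_\mathbf{x}$ at these contact points are $\pm\nvecone_t$ ($+$ at $\mathbf{A}$, $-$ at $\mathbf{B}$) and $\pm\nvectwo_t$ ($+$ at $\mathbf{C}$, $-$ at $\mathbf{D}$); and the perturbation $\mathbf{h}$ shifts each wall line by $\pm\epsilon\langle\mathbf{h},\nvecone_t\rangle$ or $\pm\epsilon\langle\mathbf{h},\nvectwo_t\rangle$ in that outward-normal direction, which is exactly the normal displacement of the corresponding boundary piece (tangential components of $\delta\gamma$ do not contribute at first order). When $\mathbf{x}\in\contact_\mathbf{x}(t)$, the rotation path itself is a boundary piece with displacement $\mathbf{h}$, arc-length $|\mathbf{x}'|\,dt$, and outward normal $\rotmat{\pi/2}\mathbf{x}'/|\mathbf{x}'|$, pointing into the forbidden quadrant of the rotating hallway (the orientation being pinned down by well-behavedness, which forces $\langle\mathbf{x}',\nvecone_t\rangle\le0$ and $\langle\mathbf{x}',\nvectwo_t\rangle\ge0$).

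Summing the contributions over $\contact_\mathbf{x}(t)$, the first variation collapses to
\[
\delta\text{Area}\;=\;\epsilon\int\bigl[C_1(t)\langle\mathbf{h},\nvecone_t\rangle + C_2(t)\langle\mathbf{h},\nvectwo_t\rangle\bigr]\,dt,
\]
where $C_1$ is a signed combination of $\alpha_\mathbf{A},\alpha_\mathbf{B}$ and, when $\mathbf{x}\in\contact$, the corner term $-\langle\mathbf{x}',\nvectwo_t\rangle$, with signs dictated by whether each wall is outer or inner, and $C_2$ is the analogous combination of $\alpha_\mathbf{C},\alpha_\mathbf{D}$ and $+\langle\mathbf{x}',\nvecone_t\rangle$. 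Arbitrariness of $\mathbf{h}$ forces $C_1(t)=C_2(t)=0$; these two affine equations solve uniquely for $(\langle\mathbf{x}'',\nvecone_t\rangle,\langle\mathbf{x}'',\nvectwo_t\rangle)$, and applying $\rotmat{t}$ converts back to standard coordinates. Running this short linear-algebra solve for each of the six cases produces precisely (ODE1)--(ODE6). The addenda for Cases~1 and~5 come out automatically, since with $\mathbf{B}\notin\contact$ the equation $C_1=0$ in Case~1 reads $\alpha_\mathbf{A}=0$, forcing $\mathbf{A}'=\mathbf{0}$, and symmetrically in Case~5 the equation $C_2=0$ reads $\alpha_\mathbf{C}=0$, forcing $\mathbf{C}'=\mathbf{0}$. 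The main obstacle is the sign and orientation bookkeeping: correctly identifying outward normals at all four walls, and pinning down the rotation-path orientation via well-behavedness; once this is organized, each of the six ODEs falls out of a uniform computation.
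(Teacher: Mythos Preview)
Your approach is correct and arrives at the same scalar equations as the paper, but the packaging is genuinely different. The paper does \emph{not} perturb $\mathbf{x}$ by an arbitrary smooth $\mathbf{h}$ and invoke the normal-displacement formula. Instead, for each $t$ it constructs two specific comparison motions: slide the hallway by $\delta\nvecone_t$ (respectively $\delta\nvectwo_t$) without rotating, continue the original rotation on $[t,t+\delta]$ in this translated position, then slide back. The area change between $S_\mathbf{x}$ and the modified shape is computed geometrically as a difference of two approximate parallelograms (one at $\mathbf{x}$, one at the opposing contact point), and area-maximality forces this to vanish to order $\delta^2$. The resulting ``balance'' relations are written directly in terms of the contact-path velocities, e.g.\ $\langle\mathbf{x}'-\mathbf{A}',\nvectwo_t\rangle=0$ in Case~3, and only then are the formulas of Theorem~\ref{thm:contact-paths} substituted to obtain the second-order ODEs.

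Your route---compute the speeds $\alpha_\mathbf{A},\alpha_\mathbf{B},\alpha_\mathbf{C},\alpha_\mathbf{D}$ once, read off all normal displacements from the wall-line equations, and set the coefficients of $\langle\mathbf{h},\nvecone_t\rangle$ and $\langle\mathbf{h},\nvectwo_t\rangle$ to zero---is the standard calculus-of-variations formulation, and it handles all six cases by a single uniform bookkeeping exercise. What the paper's version buys is a more visual realization of Gerver's ``balanced polygon'' principle: each equation literally says that the arc-length being swept on one side of the hallway equals the arc-length on the opposite side. What your version buys is economy and uniformity (no need to invent the slide--rotate--slide construction or argue separately about which parallelogram is gained and which is lost), at the cost of the orientation and sign bookkeeping you flag at the end. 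Your equations $C_1=0$, $C_2=0$ are exactly the paper's balance relations rewritten; for instance in Case~3 your $C_1=\alpha_\mathbf{A}-\langle\mathbf{x}',\nvectwo_t\rangle=0$ is identical to the paper's $\langle\mathbf{x}'-\mathbf{A}',\nvectwo_t\rangle=0$.
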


\begin{proof} 
Let us start by considering Case~3, which is conceptually the simplest of the six cases. We will derive \eqref{eq:ode3} by considering the effect on the area of the shape of certain local perturbations to the sequence of rigid motions encoded by the rotation path. This is a continuum version of the argument employed by Gerver, who used similar but less direct reasoning, starting from a discrete-geometric version of the moving sofa problem in which the intersection in \eqref{eq:rotation-shape} is assumed to take place over a finite set of values of $t$ (say, the values $t_k=k \pi/2n$, $k=0,\ldots,n$, where $n$ is a discrete parameter) and the area of the resulting shape is to be optimized over the resulting finite-dimensional configuration space of polygonal shapes. Our calculation works directly in the continuum regime and does not require taking a limit from the discrete variant of the problem.

Fix a small positive value $\delta$, and denote $t'=t+\delta$. The idea is to replace the rigid motion (simultaneous rotation and sliding) of the hallway $L$ encoded by the rotation path $\mathbf{x}$ by the following modified sequence of operations:
\begin{enumerate}
\item As a time coordinate $s$ ranges in $[0,t]$, drag the inner corner $(0,0)$ of the hallway along the rotation path $\mathbf{x}(s)$ while rotating the hallway around that corner (with the rotation angle being equal to the time coordinate $s$), similarly to the original sequence of motions.
\item Slide the hallway \emph{without rotating it} by a distance of $\delta$ in the direction of the vector $\nvecone_t$.
\item As $s$ ranges in $[t,t']$, drag the inner corner of the hallway, now positioned at $\mathbf{x}(t)+\delta \nvecone_t$, along the translated copy $\mathbf{x}(s) + \delta \nvecone_t$ of the segment $\mathbf{x}(s)$, $t\le s\le t'$ of $\mathbf{x}$, while continuing the rotation so that for each $s$ the angle of rotation is equal to $s$ as in the original motion.
\item Slide the hallway \emph{without rotating it} by a distance of $\delta$ in the direction of the vector $-\nvecone_{t'}$. The inner corner of the hallway is now at $\mathbf{x}(t')$.
\item Continue the rotation for $s\in [t',\pi/2]$ as prescribed by the original rotation path $\mathbf{x}$, similarly to step 1 above.
\end{enumerate}
Denote by $S_\mathbf{x}'$ the shape contained in the intersection of the hallway copies being translated and rotated in this modified sequence, formally given by the expression
\begin{align*}
S_\mathbf{x}' &=
\armh \cap \Big(\mathbf{x}(\pi/2)+\rotmat{\pi/2}(\armv) \Big)
\cap \bigcap_{0\le s\le t} \Big(\mathbf{x}(s)+\rotmat{s}(\hallway)\Big)
\\ & \qquad \cap \bigcap_{0\le r \le \delta} \Big(\mathbf{x}(t)+ r \nvecone_t+\rotmat{t}(\hallway)\Big)
\\ & \qquad \cap \bigcap_{t\le s\le t'} \Big(\mathbf{x}(s)+\delta \nvecone_t +\rotmat{s}(\hallway)\Big)
\\ & \qquad \cap \bigcap_{0\le r \le \delta} \Big(\mathbf{x}(t')+ r \nvecone_t+\rotmat{t'}(\hallway)\Big)
\\ & \qquad \cap \bigcap_{t'\le s\le \pi/2} \Big(\mathbf{x}(s)+\rotmat{s}(\hallway)\Big).
\end{align*}
Comparing $S_\mathbf{x}$ and $S_\mathbf{x}'$, we see that in changing from the former to the latter, some area was lost near the point $\mathbf{x}(t)$, and some area was gained near the contact point $\mathbf{A}(t)$. The part of the shape near the third contact point $\mathbf{C}$ remains the same, since for each $s\in [t,t']$, at time $s$ during step 3 of the motion, the outer hallway wall parallel to $\nvecone_s$ is tangent to the contact path $\mathbf{C}(s)$ just as in the original motion of the hallway.

Now, because of the assumption that $\mathbf{x}$ is differentiable at $t$, the shape of the piece that was lost is 
approximately a parallelogram with sides represented by the vectors $\delta \mathbf{x}'(t)$ and $\delta \nvecone_t$ incident to the point $\mathbf{x}(t)$, so its area is easily seen to be given (approximately for small $\delta$) by
\begin{equation}
\label{eq:area-lost}
\left|\Big\langle \mathbf{x}'(t), \nvectwo_t \Big\rangle \delta^2\right| + o(\delta^2) = 
\Big\langle \mathbf{x}'(t), \nvectwo_t \Big\rangle \delta^2 + o(\delta^2) \qquad (\delta \to 0),
\end{equation}
where the equality follows from the assumption that $\mathbf{x}$ is well-behaved at~$t$.
Similar reasoning applies to the piece that was gained, which is approximately a parallelogram (actually a rectangle, because $\mathbf{A}'(t)$ is parallel to $\nvectwo_t$) with sides represented by the vectors $\delta \mathbf{A}'(t)$ and $\delta \nvecone_t$ incident to the point $\mathbf{A}(t)$. Again using the well-behavedness assumption, the area of this rectangle is given for small $\delta$ by
\begin{equation}
\label{eq:area-gained}
 \Big\langle \mathbf{A}'(t), \nvectwo_t \Big\rangle \delta^2 + o(\delta^2) \qquad (\delta \to 0).
 \end{equation}
Comparing \eqref{eq:area-lost} and \eqref{eq:area-gained},  we see that under the assumption that the shape $S_\mathbf{x}$ has maximal area, the inequality
$$
\Big\langle \mathbf{x}'(t) - \mathbf{A}'(t), \nvectwo_t \Big\rangle  \ge 0
$$
must hold. But then, the reverse inequality $\langle \mathbf{x}'(t) - \mathbf{A}'(t), \nvectwo_t \rangle  \le 0$ must hold as well, because in the definition of the modified sequence of rigid motions we could have decided to push the hallway in the opposite direction $-\nvecone_t$ in step 2 and then in the direction $\nvecone_t$ in step 4, which would lead to a piece of area being \emph{gained}, instead of lost, near $\mathbf{x}(t)$, and a piece being lost instead of gained near $\mathbf{A}'(t)$, with the formulas \eqref{eq:area-lost} and \eqref{eq:area-gained} remaining valid but exchanging their meanings. Thus, we can conclude that under the area-maximization assumption, the rotation path must satisfy the differential 
relation
\begin{equation} \label{eq:differential-relation1}
\Big\langle \mathbf{x}'(t) - \mathbf{A}'(t), \nvectwo_t \Big\rangle  = 0.
\end{equation}
By a similar argument, it can now be shown that $\mathbf{x}$ satisfies a second differential relation, 
namely
\begin{equation} \label{eq:differential-relation2}
\Big\langle \mathbf{x}'(t) - \mathbf{C}'(t), \nvecone_t \Big\rangle  = 0.
\end{equation}
This is derived by considering a different modification of the sequence of rigid motions of the hallway, in which in steps 2 and 4 described above we slide it in a direction parallel to the vector $\nvectwo_t$ instead of $\nvecone_t$.

Now, recall from Theorem~\ref{thm:contact-paths} that $\mathbf{A}(t)$ and $\mathbf{C}(t)$ can be expressed in terms of $\mathbf{x}$. Differentiating \eqref{eq:contact-pathA} and \eqref{eq:contact-pathC} and using the relations $\nvecone_t'=\nvectwo_t$, $\nvectwo_t'=-\nvecone_t$, we get that
\begin{align*}
\mathbf{A}'(t) &= \mathbf{x}'(t) + \Big\langle \mathbf{x}''(t),\nvecone_t \Big\rangle \nvectwo_t + \Big\langle \mathbf{x}'(t), \nvectwo_t \Big\rangle \nvectwo_t - \Big\langle \mathbf{x}'(t), \nvecone_t \Big\rangle \nvecone_t + \nvectwo_t, \\
\mathbf{C}'(t) &= \mathbf{x}'(t) - \Big\langle \mathbf{x}''(t),\nvectwo_t \Big\rangle \nvecone_t + \Big\langle \mathbf{x}'(t), \nvecone_t \Big\rangle \nvecone_t - \Big\langle \mathbf{x}'(t), \nvectwo_t \Big\rangle \nvectwo_t - \nvecone_t.
\end{align*}
By substituting these expressions into the two equations 
\eqref{eq:differential-relation1}--\eqref{eq:differential-relation2} we get the pair of differential equations
\begin{align*}
\Big\langle \mathbf{x}''(t), \nvecone_t \Big\rangle &= -\Big\langle \mathbf{x}'(t), \nvectwo_t \Big\rangle -1, \\
\Big\langle \mathbf{x}''(t), \nvectwo_t \Big\rangle &= \Big\langle \mathbf{x}'(t), \nvecone_t \Big\rangle -1.
\end{align*}
It is now easily checked that \eqref{eq:ode3} is the same pair of equations written in matrix form.
This concludes our proof for Case~3.

Next, consider Case~2. Here we employ similar reasoning involving the same local modification of the sequence of rigid motions as described above, but now take into account the additional effect of perturbing the motion on the part of the shape near the contact point $\mathbf{D}(t)$. In this case the equation \eqref{eq:differential-relation1} is still satisfied, since the perturbed sequence of rigid motions that led to this equation does not change the shape near $\mathbf{D}(t)$, just like it did not affect the shape near $\mathbf{C}(t)$. However, in the second equation \eqref{eq:differential-relation2} an extra term needs to be introduced to take into account the behavior near $\mathbf{D}(t)$. By looking at the change in the areas between $S_\mathbf{x}$ and $S_\mathbf{x}'$ and reasoning as we did for Case~3, it is not hard to work out that the correct equation that should replace \eqref{eq:differential-relation2} is
\begin{equation} \label{eq:differential-relation3}
\Big\langle \mathbf{x}'(t) - \mathbf{C}'(t)  - \mathbf{D}'(t), \nvecone_t \Big\rangle  = 0.
\end{equation}
Again, taking \eqref{eq:differential-relation1} and \eqref{eq:differential-relation3} and substituting the expressions \eqref{eq:contact-pathC}--\eqref{eq:contact-pathD} for $\mathbf{C}(t)$ and $\mathbf{D}(t)$ yields \eqref{eq:ode2} after a short computation. This explains Case~2. Case~4 is completely analogous to Case~2 (in fact, they are related to each other by an obvious symmetry) and is handled similarly; one can check that in this case the two differential equations consist of \eqref{eq:differential-relation2} and a modified version of \eqref{eq:differential-relation1}, 
namely
\begin{equation} \label{eq:differential-relation4}
\Big\langle \mathbf{x}'(t) - \mathbf{A}'(t) - \mathbf{B}'(t), \nvectwo_t \Big\rangle  = 0.
\end{equation}
Again, a short computation, which we omit, brings this to the form of the vector ODE \eqref{eq:ode4}.

Next, Case~6 combines the two modifications of the equations for Case~3 that we made to handle Cases~2 and~4. Thus, the relevant pair of differential equations consists of \eqref{eq:differential-relation3} and \eqref{eq:differential-relation4}, which as before can be checked to be equivalent to \eqref{eq:ode6}.

It remains to consider Cases 1 and 5. Since they are symmetric to each other, we discuss only Case~1. This case can be thought of as a degenerate version of Case~2. The argument involving sliding the hallway in the direction of $\nvecone_t$ in step 2 and in the direction of $-\nvecone_{t'}$ in step 4 still applies, but results in the differential equation
\begin{equation} \label{eq:differential-relation5}
\Big\langle \mathbf{A}'(t), \nvectwo_t \Big\rangle  = 0
\end{equation}
instead of \eqref{eq:differential-relation1},
since $\mathbf{x}$ is not a contact point, so for small values of $\delta$ there is no change to the area near $\mathbf{x}(t)$. The second equation \eqref{eq:differential-relation3} from Case~2 is similarly replaced 
with
\begin{equation} \label{eq:differential-relation6}
\Big\langle \mathbf{C}'(t)  + \mathbf{D}'(t), \nvecone_t \Big\rangle  = 0.
\end{equation}
Once again, those two equations can be brought to the form of the single vector equation \eqref{eq:ode1} using routine algebra. Note also that because $\mathbf{A}'(t)$ is parallel to $\nvectwo_t$, \eqref{eq:differential-relation5} is equivalent to the relation $\mathbf{A}'(t)=\mathbf{0}$, which explains the additional claim in the theorem that the contact point $\mathbf{A}$ remains fixed on an interval in which Case~1 applies.
This completes the proof.
\end{proof}

The equations \eqref{eq:ode1}--\eqref{eq:ode6} are easy to solve, and their solutions will form the basis to our rederivation of Gerver's results and to our new construction in the ambidextrous moving sofa problem. We record the general form of these solutions in the following result.

\begin{thm}
The general solutions of the ODEs \eqref{eq:ode1}--\eqref{eq:ode6} are given respectively by
\begin{align}\mathbf{x}_1(t) &= 
\rotmat{t}
\begin{pmatrix}
a_1 \cos t + a_2 \sin t - 1 \\
-a_2 \cos t + a_1 \sin t - 1/2
\end{pmatrix} + \boldsymbol{\kappa}_1
\label{eq:ode1-gensolution}
\tag{SOL1}
\\[5pt]
\mathbf{x}_2(t) &= 
\rotmat{t}
\begin{pmatrix} 
-\tfrac14 t^2 + b_1 t + b_2 \\[3pt] \tfrac12 t - b_1 - 1
\end{pmatrix}
+\boldsymbol{\kappa}_2
\label{eq:ode2-gensolution}
\tag{SOL2}
\\[5pt]
\mathbf{x}_3(t) &= 
\rotmat{t}
\begin{pmatrix} 
c_1-t \\ c_2+t
\end{pmatrix}
+\boldsymbol{\kappa}_3
\label{eq:ode3-gensolution}
\tag{SOL3}
\\[5pt]
\mathbf{x}_4(t) &= 
\rotmat{t}
\begin{pmatrix} 
-\tfrac12 t + d_1 - 1 \\[3pt]
 -\tfrac14 t^2 + d_1 t + d_2
\end{pmatrix}
+\boldsymbol{\kappa}_4
\label{eq:ode4-gensolution}
\tag{SOL4}
\\[5pt]
\mathbf{x}_5(t) &= 
\rotmat{t}
\begin{pmatrix}
e_1 \cos t + e_2 \sin t - 1/2 \\
-e_2 \cos t + e_1 \sin t - 1
\end{pmatrix} + \boldsymbol{\kappa}_5
\label{eq:ode5-gensolution}
\tag{SOL5}
\\[5pt]
\mathbf{x}_6(t) &= 
\rotmat{t}
\begin{pmatrix}
f_1 \cos (t/2) + f_2 \sin (t/2) - 1 \\
-f_2 \cos (t/2) + f_1 \sin (t/2) - 1
\end{pmatrix} + \boldsymbol{\kappa}_6,
\label{eq:ode6-gensolution}
\tag{SOL6}
\end{align}
where $\boldsymbol{\kappa}_j=(\kappa_{j,1},\kappa_{j,2})^\top$ $(j=1,\ldots,6)$ and $a_i, b_i, c_i, d_i, e_i, f_i$ $(i=1,2)$ are arbitrary real constants.
\end{thm}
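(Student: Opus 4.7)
The plan is to derive each of the formulas (SOL1)--(SOL6) by reducing the corresponding vector ODE to a constant-coefficient linear system via the change of variable $\mathbf{x}(t)=\rotmat{t}\mathbf{y}(t)+\boldsymbol{\kappa}$, with $\boldsymbol{\kappa}$ an arbitrary constant vector. Each (ODE$i$) is a second-order linear system in the two-dimensional unknown $\mathbf{x}$, so its affine solution space has dimension $4$, matching the four real parameters in the corresponding (SOL$i$); it therefore suffices to solve for $\mathbf{y}$ and exhibit four linearly independent pieces.

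Writing $J=\bigl(\begin{smallmatrix}0&-1\\1&0\end{smallmatrix}\bigr)$ so that $\rotmat{t}'=J\rotmat{t}=\rotmat{t}J$, one computes $\mathbf{x}'(t)=\rotmat{t}\bigl(J\mathbf{y}+\mathbf{y}'\bigr)$ and $\mathbf{x}''(t)=\rotmat{t}\bigl(\mathbf{y}''+2J\mathbf{y}'-\mathbf{y}\bigr)$. The key algebraic observation is that the time-dependent matrix $M_i$ standing to the right of $\rotmat{t}$ in (ODE$i$) satisfies $M_i\rotmat{t}=N_i$ for a constant matrix $N_i$; a direct computation using $\rotmat{t}^\top\rotmat{t}=I$ yields $N_1=N_5=2J$, $N_3=J$, $N_6=\tfrac{3}{2}J$, $N_2=\operatorname{diag}(1,\tfrac{3}{2})J$, and $N_4=\operatorname{diag}(\tfrac{3}{2},1)J$. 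After cancelling the common $\rotmat{t}$, (ODE$i$) becomes
\[
\mathbf{y}''(t)+2J\mathbf{y}'(t)-\mathbf{y}(t)=\mathbf{v}_i+N_i\bigl(J\mathbf{y}(t)+\mathbf{y}'(t)\bigr),
\]
where $\mathbf{v}_i$ is the constant vector in (ODE$i$). For Cases 1 and 5, where $N_i=2J$, the equation telescopes via $J^2=-I$ to $\mathbf{y}''+\mathbf{y}=\mathbf{v}_i$, yielding the $\cos t,\sin t$ pieces of (SOL1) and (SOL5); Case 3 collapses to the first-order equation $\mathbf{y}''+J\mathbf{y}'=\mathbf{v}_3$ whose linear-in-$t$ particular solution matches (SOL3); Case 6 becomes $\mathbf{y}''+\tfrac{1}{2}J\mathbf{y}'+\tfrac{1}{2}\mathbf{y}=\mathbf{v}_6$, whose characteristic exponents (in the complex identification $J\leftrightarrow i$) are $i/2$ and $-i$, producing the displayed $\cos(t/2),\sin(t/2)$ piece of (SOL6) together with a frequency-one homogeneous mode; and the symmetric Cases 2 and 4 produce coupled $2\times 2$ systems in which the factor $\operatorname{diag}(\cdot,\cdot)J$ forces one component to be quadratic and the other linear in $t$, matching the polynomial parts of (SOL2) and (SOL4).

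It remains to observe that in every case the four-dimensional solution space for $\mathbf{y}$ contains a two-dimensional subspace of vectors of the form $\rotmat{-t}\mathbf{c}$, $\mathbf{c}\in\R^2$ (easy to check by direct substitution, since $\frac{d}{dt}\rotmat{-t}=-J\rotmat{-t}$), which after multiplying by $\rotmat{t}$ become constants and are absorbed into $\boldsymbol{\kappa}_i$; the remaining two degrees of freedom in $\mathbf{y}$ are precisely the parameters $a_i,b_i,c_i,d_i,e_i,f_i$ in (SOL$i$). I expect the main obstacle to be purely organizational: six cases must be handled, and in Cases 2 and 4 one has to carefully track the asymmetric appearance of $t^2$ produced by the diagonal factors in $N_2$ and $N_4$. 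Because the entire argument reduces to linear algebra with constant-coefficient ODEs, the verification can equivalently be performed by substituting each (SOL$i$) directly into (ODE$i$) and matching coefficients, a mechanical computation well suited to the companion \texttt{MovingSofas} symbolic package.
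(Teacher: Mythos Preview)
Your proposal is correct and close in spirit to the paper's own argument: both reduce the time-dependent systems to constant-coefficient linear ODEs by rotating into the moving frame, and both note that direct substitution of (SOL$i$) into (ODE$i$) suffices as a formal proof. The one genuine difference is the choice of substitution. The paper sets $\mathbf{y}(t)=\rotmat{-t}\mathbf{x}'(t)$, i.e.\ it rotates the \emph{velocity}; this immediately drops the order and yields a first-order system $\mathbf{y}'=T\mathbf{y}+\mathbf{v}$ with constant $T$ and $\mathbf{v}$ (e.g.\ $T=0$ for (ODE3), $T=\bigl(\begin{smallmatrix}0&0\\1/2&0\end{smallmatrix}\bigr)$ for (ODE2)), which is solved in one step and then integrated once, the constant of integration becoming $\boldsymbol{\kappa}_i$. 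Your substitution $\mathbf{x}(t)=\rotmat{t}\mathbf{y}(t)+\boldsymbol{\kappa}$ rotates the \emph{position}, keeping the system second-order; you then have to identify and discard the two-dimensional $\rotmat{-t}\mathbf{c}$ mode by hand. The paper's route is slightly slicker (no spurious modes to track, no characteristic-exponent analysis in Case~6), while yours has the minor advantage of making the translation constant $\boldsymbol{\kappa}_i$ explicit from the outset; either way the computations are equivalent and routine.
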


\begin{proof} 
By the general theory of linear ODEs it is enough to check that the equations are satisfied by the respective expressions. This is a routine computation, which we omit. See Section~5 of \texttt{MovingSofas}, the companion \texttt{Mathematica} package to this article \cite{romik-sofa-mathematica}, for an automated verification. Note that the solutions were derived by making the substitution $\mathbf{y}(t)=
\rotmat{-t}
\mathbf{x}'(t)$ and then rewriting each of the ODEs in terms of $\mathbf{y}(t)$. It is easy to check that with this substitution each of the six equations transforms into an equation of the form
$$
\mathbf{y}'(t) = T \mathbf{y} + \mathbf{v},
$$
where $T$ is a constant $2\times 2$ matrix and $\mathbf{v}$ is a constant vector. (For example, in the case of \eqref{eq:ode2} we get $T=\left(\begin{smallmatrix}0&0\\1/2&0 \end{smallmatrix}\right)$ and $\mathbf{v}=(-1,-1/2)^\top$, and in the case of \eqref{eq:ode3} 
we get $T=\left(\begin{smallmatrix}0&0\\0&0 \end{smallmatrix}\right)$ and $\mathbf{v}=(-1,-1)^\top$.)
The procedure for solving ODEs of this type is standard. 
\end{proof}

\section{A new derivation of Gerver's sofa}

\label{sec:gerver-rederivation}

We now show how the results of the previous sections can be used to give a transparent and conceptually simple derivation of Gerver's sofa. 
The idea is to look for a rotation path $\mathbf{x}$ that satisfies the following assumptions:

\begin{enumerate}

\item The path $\mathbf{x}$ is continuously differentiable.
\item The path $\mathbf{x}$ (and therefore also the associated shape $S_{\mathbf{x}}$) has a left-to-right symmetry around the vertical axis passing through its midpoint $\mathbf{x}(\pi/4)$.
\item The associated contact path $\mathbf{A}(t)$ satisfies 
\begin{equation} \label{eq:gerver-assumptionA}
\mathbf{A}(0)=(1,0)^\top.
\end{equation}
\item The set of contact points transitions through the following five distinct phases:
\begin{equation} \label{eq:gerver-contact-phases}
\contact_{\mathbf{x}}(t) = \begin{cases}
\{ \mathbf{A}, \mathbf{C}, \mathbf{D} \} & \textrm{if }\phantom{\pi/2-\ \,}0< t< \varphi, \\
\{ \mathbf{x}, \mathbf{A}, \mathbf{C}, \mathbf{D} \} & \textrm{if }\phantom{\pi/2-\ }\varphi \le t < \theta, \\
\{ \mathbf{x}, \mathbf{A}, \mathbf{C} \}& \textrm{if }\phantom{\pi/2-\ \,}\theta \le t \le \pi/2-\theta, \\
\{ \mathbf{x}, \mathbf{A}, \mathbf{B}, \mathbf{C} \}& \textrm{if }\, \pi/2-\theta < t \le \pi/2-\varphi, \\
\{ \mathbf{A}, \mathbf{B}, \mathbf{C} \} & \textrm{if } \pi/2-\varphi < t < \pi/2,
\end{cases}
\end{equation}
where $0<\varphi<\theta<\pi/4$ are two critical angles corresponding to where these transitions occur, and whose values need to be determined. 

\item During each of the five phases in \eqref{eq:gerver-contact-phases} the rotation path is well-behaved (in the sense of Section~\ref{sec:rotation-paths}).
\end{enumerate}

Under these assumptions, in order for the shape to be area-maximizing, the rotation path must satisfy in each of the phases of the rotation the correpsonding ODE from the family \eqref{eq:ode1}--\eqref{eq:ode5} given in Theorem~\ref{thm:odes}. (Note that the sixth differential equation \eqref{eq:ode6} does not appear; it plays no part in the derivation of Gerver's sofa, but will appear in our new construction for the ambidextrous moving sofa problem --- see Section~\ref{sec:ambidextrous}.) In other words, our rotation path must be of the form
\begin{equation}
\label{eq:gerverx-def}
\mathbf{x}(t) = \begin{cases}
\mathbf{x}_1(t) & \textrm{if }\phantom{\pi/2-\ \,}0\le t< \varphi, \\
\mathbf{x}_2(t) & \textrm{if }\phantom{\pi/2-\ }\varphi \le t < \theta, \\
\mathbf{x}_3(t) & \textrm{if }\phantom{\pi/2-\ \,}\theta \le t \le \pi/2-\theta, \\
\mathbf{x}_4(t) & \textrm{if }\, \pi/2-\theta < t \le \pi/2-\varphi, \\
\mathbf{x}_5(t) & \textrm{if }\pi/2-\varphi < t \le \pi/2.
\end{cases}
\end{equation}
obtained by gluing together the solutions \eqref{eq:ode1-gensolution}--\eqref{eq:ode5-gensolution} to the first five ODEs. The problem therefore reduces to the question of finding the values of the $22$ unknown parameters $\varphi$, $\theta$, $\boldsymbol{\kappa}_j$ $(j=1,\ldots,5)$, and $a_i, b_i, c_i, d_i, e_i$, $(i=1,2)$. But the parameters are not independent; rather, they satisfy a system of constraints arising out of the assumptions we made about the properties of $\mathbf{x}$. For example, the left-to-right symmetry condition can be expressed in the form of the relation
\begin{equation}
\mathbf{x}'(\pi/2-t) \equiv \begin{pmatrix} 1&0 \\ 0&-1 \end{pmatrix} \mathbf{x}'(t). \label{eq:gerver-symmetry}
\end{equation}
Referring to the definitions, it is easy to translate this into explicit linear relations between the parameters, namely the five equations
\begin{align}
e_1 &= a_1, \label{eq:gerver-coeffs-symmetry1} \\
e_2 &= -a_2, \label{eq:gerver-coeffs-symmetry2} \\
d_1 &= \frac{\pi}{4} - b_1, \label{eq:gerver-coeffs-symmetry3} \\
d_2 &= b_2 + \frac{\pi}{4}\left(2 b_1 - \frac{\pi}{4} \right), \label{eq:gerver-coeffs-symmetry4} \\
c_2 &= c_1-\frac{\pi}{2}. \label{eq:gerver-coeffs-symmetry5}
\end{align}
Next, the assumption that $\mathbf{A}(0)=(1,0)^\top$, together with the standard requirement that $\mathbf{x}(0)=(0,0)^\top$, translate to the three linear relations
$\kappa_{1,1} = 1-a_1$, $\kappa_{1,2} = 1/2-a_2$, $\kappa_{1,2} = -a_2$, which can be rewritten as
\begin{align}
\kappa_{1,1} &= 1-a_1, \label{eq:gerverxA-time0-exp1} \\
\kappa_{1,2} &= 1/4, \label{eq:gerverxA-time0-exp2} \\
a_2 &= -1/4. \label{eq:gerverxA-time0-exp3}
\end{align}
Next, the condition that $\mathbf{x}$ be continuously differentiable implies the vector relations
\begin{align}
\mathbf{x}_1(\varphi) &= \mathbf{x}_2(\varphi), \label{eq:gerver-cont12} \\
\mathbf{x}_1'(\varphi) &= \mathbf{x}_2'(\varphi), \label{eq:gerver-diff12} \\
\mathbf{x}_2(\theta) &= \mathbf{x}_3(\theta), \label{eq:gerver-cont23} \\
\mathbf{x}_2'(\theta) &= \mathbf{x}_3'(\theta), \label{eq:gerver-diff23} \\
\mathbf{x}_3(\pi/2-\theta) &= \mathbf{x}_4(\pi/2-\theta), \label{eq:gerver-cont34} \\
\mathbf{x}_3'(\pi/2-\theta) &= \mathbf{x}_4'(\pi/2-\theta), \label{eq:gerver-diff34} \\
\mathbf{x}_4(\pi/2-\varphi) &= \mathbf{x}_5(\pi/2-\varphi), \label{eq:gerver-cont45} \\
\mathbf{x}_4'(\pi/2-\varphi) &= \mathbf{x}_5'(\pi/2-\varphi), \label{eq:gerver-diff45}
\end{align}
of which \eqref{eq:gerver-diff34} and \eqref{eq:gerver-diff45} are redundant, since they are easily seen to follow automatically from \eqref{eq:gerver-cont12}--\eqref{eq:gerver-diff23} together with the symmetry assumptions.

Finally, we have two additional vector equations,
\begin{align}
\mathbf{x}_1(\varphi) &= \mathbf{B}(\pi/2-\theta), \label{eq:gerver-contactintersect1} \\
\mathbf{x}_5(\pi/2-\varphi) &= \mathbf{D}(\theta), \label{eq:gerver-contactintersect2}
\end{align}
that encapsulate the requirement that the transitions described in \eqref{eq:gerver-contact-phases} between the different phases for the set of contact points  occur where we assumed they do. Here, too, the second equation \eqref{eq:gerver-contactintersect2} is redundant and follows from \eqref{eq:gerver-contactintersect1} and the symmetry assumption.

The equations 
\eqref{eq:gerver-coeffs-symmetry1}--\eqref{eq:gerver-contactintersect2} 
comprise a total of 28 (scalar) equations in the 22 variables $\varphi$, $\theta$, $\kappa_j$, $a_i,b_i,c_i,d_i,e_i$, of which 6 were noted as being redundant, leaving 22 truly independent equations, equal to the number of variables. It is not immediately obvious, but this system of equations has a unique solution. Moreover, finding the numerical values of the parameters is now a simple matter of entering the equations into \texttt{Mathematica} and invoking its \texttt{FindRoot[$\cdots$]} command to numerically solve the system. This immediately yields the desired numerical values to any reasonable desired level of accuracy. The computation is carried out in \texttt{MovingSofas}, the companion \texttt{Mathematica} package to this article \cite[Section~6]{romik-sofa-mathematica}. The numerical values of the parameters are listed in Table~\ref{table:gerver-numerical-constants} for reference.

\begin{table}
$$
\begin{array}{|cl|cl|} \hline
\varphi & \phantom{-}0.039 177 364 790 083 641\ldots & 
\theta & \phantom{-}0.681 301 509 382 724 894\ldots 
\\[5pt]
\kappa_{1,1} & -0.210 322 422 072 688 751 \ldots & a_1 & \phantom{-}1.210 322 422 072 688 751\ldots
\\
\kappa_{1,2} & \phantom{-}1/4  & a_2 & -1/4
\\[5pt]
\kappa_{2,1} & -0.919 179 292 771 593 322 \ldots & b_1 & -0.527 624 598 026 784 624\ldots
\\
\kappa_{2,2} & \phantom{-}0.472 406 619 750 805 465 \ldots & b_2 & \phantom{-}0.920 258 385 160 637 622\ldots
\\[5pt]
\kappa_{3,1} & -0.613 763 229 430 251 668\ldots & c_1 & \phantom{-}0.626 045 522 848 465 867\ldots
\\
\kappa_{3,2} & \phantom{-}0.889 626 479 003 221 860\ldots & c_2 & -0.944 750 803 946 430 751\ldots
\\[5pt]
\kappa_{4,1} & -0.308 347 166 088 910 014\ldots & d_1 & \phantom{-}1.313 022 761 424 232 933\ldots
\\
\kappa_{4,2} & \phantom{-}0.472 406 619 750 805 465\ldots & d_2 & -0.525 382 670 414 554 437\ldots
\\[5pt]
\kappa_{5,1} & -1.017 204 036 787 814 585\ldots & e_1 & \phantom{-}1.210 322 422 072 688 751\ldots
\\
\kappa_{5,2} & \phantom{-}1/4 & e_2 & \phantom{-}1/4
\\ \hline 
\end{array}
$$
\caption{Numerical values for the constants in Gerver's sofa.}
\label{table:gerver-numerical-constants}
\end{table}

While the technique described above provides the quickest and most effortless way to get the value of the constants, it is worth taking a closer look at the system of equations we are solving to get a better insight into its structure, which may be useful, for example, if one wishes to prove that the solution is unique, and in preparation for our analysis of the ambidextrous moving sofa problem in the next section. A key observation is that all our equations are linear in all the parameters $\kappa_{j,1}, \kappa_{j,2}$, $a_i,b_i,c_i,d_i,e_i$, and are only nonlinear in the two critical angles $\varphi, \theta$. 
This suggests that a large part of the solution to the system can be carried out symbolically, with only the last step involving a numerical root-finding procedure. Thus, an alternative approach to solving the system is to pick a set of $20$ of the equations (out of the $22$ we used in the purely numerical approach described above); solve it as a linear system in the $20$ ``linear'' parameters to obtain symbolic expressions for those $20$ parameters in terms of the two angular variables $\varphi$ and $\theta$; then substitute those expressions into the two remaining equations, to obtain two nonlinear equations in $\varphi$ and $\theta$, which can then be solved numerically. Our companion \texttt{Mathematica} package illustrates this method as well \cite[Section~7]{romik-sofa-mathematica}, and also shows how to compute the area $2.21953166\ldots$ of Gerver's sofa to high accuracy \cite[Section~8]{romik-sofa-mathematica}.  

\section{An exact solution in the ambidextrous moving sofa problem}

\label{sec:ambidextrous}

Having rederived Gerver's shape, we now show how the same techniques can be used with slight modification to derive a new shape that, analogously to Gerver's sofa, is a highly plausible candidate to be the solution to the ambidextrous moving sofa problem.

The idea behind our new construction is to look for a rotation path $\mathbf{x}$, with an associated shape $S_{\mathbf{x}}$, that satisfies a modified version of the list of assumptions in our derivation of Gerver's shape. Namely, we assume:

\begin{enumerate}

\item The path $\mathbf{x}$ is continuously differentiable, as before.
\item The path $\mathbf{x}$ has a left-to-right symmetry, as before.
\item The associated contact path $\mathbf{A}(t)$ satisfies 
\begin{equation} \label{eq:ambi-assumptionA}
\mathbf{A}(0)=(1,1/2)^\top
\end{equation}
(compare with \eqref{eq:gerver-assumptionA}).
\item The set of contact points transitions through the following three (instead of five) distinct phases:
\begin{equation} \label{eq:ambi-contact-phases}
\contact_{\mathbf{x}}(t) = \begin{cases}
\{ \mathbf{A}, \mathbf{C}, \mathbf{D} \} & \textrm{if }\phantom{\pi/2-\ \,}0< t< \beta, \\
\{ \mathbf{x}, \mathbf{A}, \mathbf{B}, \mathbf{C}, \mathbf{D} \}& \textrm{if }\phantom{\pi/2-\ \,}\beta \le t \le \pi/2-\beta, \\
\{ \mathbf{A}, \mathbf{B}, \mathbf{C} \} & \textrm{if } \pi/2-\beta < t < \pi/2,
\end{cases}
\end{equation}
(compare with \eqref{eq:gerver-contact-phases}) where $0<\beta<\pi/4$ is a new critical angle whose value needs to be determined. 

\item During each of the five phases in \eqref{eq:ambi-contact-phases} the rotation path is well-behaved.

\end{enumerate}

Now observe that given any rotation path $\mathbf{x}$ and an associated shape $S_\mathbf{x}$, one can trivially turn the shape into an ``ambidextrous shape'' that can move around corners both to the left and to the right by replacing it with its intersection with its reflection across the line $y=1/2$; see Fig.~\ref{fig:ambidextrous-reflection}, which illustrates why the assumption \eqref{eq:ambi-assumptionA} is precisely the condition that makes the most efficient use of this type of symmetrization in terms of maximizing the area. 
\begin{figure}
\begin{center}
\scalebox{0.6}{\includegraphics{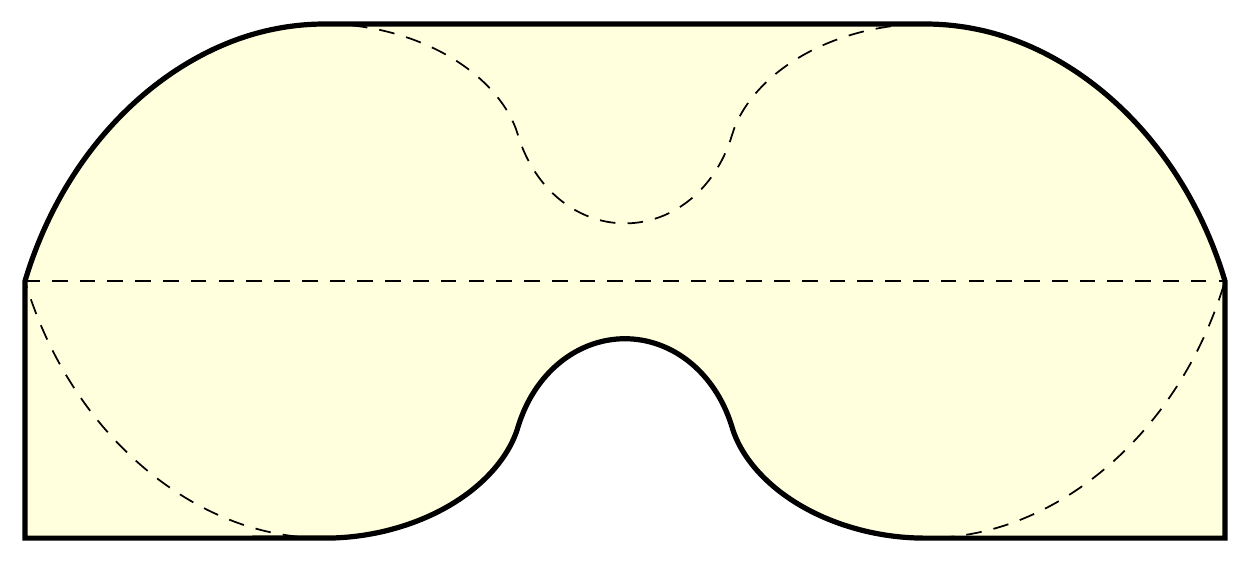}}
\caption{The shape $S_{\mathbf{x}}$ is turned into an ambidextrous shape by intersecting it with its  reflection across the line $y=1/2$.} \label{fig:ambidextrous-reflection}
\end{center}
\end{figure}
Furthermore, in order for such an up-down-symmetrized shape to have maximal area for the ambidextrous moving sofa problem, it should certainly be a \emph{local maximum} of the area, and in particular its area should only decrease under the kinds of local perturbations that were used in the proof of Theorem~\ref{thm:odes}. Thus, we see that the necessary conditions of Theorem~\ref{thm:odes} would need to hold, and the above assumptions together with the area-maximization assumption therefore imply that
the rotation path $\mathbf{x}$ must be of the form
\begin{equation} \label{eq:ambi-rotation-path}
\mathbf{x}(t) = \begin{cases}
\mathbf{x}_1(t) & \textrm{if }\phantom{\pi/2-\ \,}0< t< \beta, \\
\mathbf{x}_6(t) & \textrm{if }\phantom{\pi/2-\ \,}\beta \le t \le \pi/2-\beta, \\
\mathbf{x}_5(t) & \textrm{if } \pi/2-\beta < t < \pi/2,
\end{cases}
\end{equation}
with $\mathbf{x}_1(t)$, $\mathbf{x}_6(t)$ and $\mathbf{x}_5(t)$ being given by \eqref{eq:ode1-gensolution}, \eqref{eq:ode6-gensolution} and \eqref{eq:ode5-gensolution}.
As with our derivation of Gerver's sofa, the goal is now to find values of the parameters
$\beta$, $\boldsymbol{\kappa}_j$ $(j=1,6,5)$, and $a_i, f_i, e_i$, $(i=1,2)$ that enter into the definition of $\mathbf{x}$ and for which the above assumptions on $\mathbf{x}$ are satisfied. 

To proceed, we translate the list of assumptions on $\mathbf{x}$ into a concrete system of equations, following similar reasoning to that employed in Section~\ref{sec:gerver-rederivation}. First, the symmetry condition (encapsulated by \eqref{eq:gerver-symmetry}) now translates to the three equations
\begin{align}
e_1 &= a_1, \label{eq:ambi-coeffs-symmetry1} \\
e_2 &= -a_2, \label{eq:ambi-coeffs-symmetry2} \\
f_2 &= (1-\sqrt{2})f_1. \label{eq:ambi-coeffs-symmetry3}
\end{align}
Second, it is readily checked that the equations $\mathbf{x}(0)=(0,0)^\top$, $\mathbf{A}(0)=(1,1/2)$ are equivalent to the linear relations
\begin{align}
\kappa_{1,1}&=1-a_1, \label{eq:ambi-xA-relations1} \\
a_2 &= 0, \label{eq:ambi-xA-relations2}
\\
\kappa_{1,2} &= 1/2. \label{eq:ambi-xA-relations3}
\end{align}
Third, the assumption that the rotation path is continuously differentiable translates to the equations
\begin{align}
\mathbf{x}_1(\beta) &= \mathbf{x}_6(\beta), \label{eq:ambi-cont12} \\
\mathbf{x}_1'(\beta) &= \mathbf{x}_6'(\beta), \label{eq:ambi-diff12} \\
\mathbf{x}_6(\pi/2-\beta) &= \mathbf{x}_5(\pi/2-\beta), \label{eq:ambi-cont34} \\
\mathbf{x}_6'(\pi/2-\beta) &= \mathbf{x}_5'(\pi/2-\beta). \label{eq:ambi-diff34} 
\end{align}
Here, the last equation \eqref{eq:ambi-diff34} is redundant and follows from \eqref{eq:ambi-diff12} together with the symmetry relations \eqref{eq:ambi-coeffs-symmetry1}--\eqref{eq:ambi-coeffs-symmetry3}.

Finally, the assumption \eqref{eq:ambi-contact-phases} regarding the structure of the set of contact points will be satisfied if the two equations
\begin{align}
\mathbf{x}_1(\beta) &= \mathbf{B}(\beta), \label{eq:ambi-contactintersect1} \\
\mathbf{x}_5(\pi/2-\varphi) &= \mathbf{D}(\pi/2-\beta) \label{eq:ambi-contactintersect2}
\end{align}
hold. In this pair of equations \eqref{eq:ambi-contactintersect2} is again redundant and follows from \eqref{eq:ambi-contactintersect1} and symmetry. Furthermore, by \eqref{eq:contact-pathB}, the vector equation \eqref{eq:ambi-contactintersect1} is actually equivalent to the scalar equation
\begin{equation}
\big\langle \mathbf{x}_1'(\beta), \nvecone_t \big\rangle = 0. \label{eq:ambi-contactintersect3}
\end{equation}

The procedure for solving the equations is now very similar to the one we carried out in the previous section, with the crucial difference that the equations in this case are solvable in closed form.

\begin{thm}
\label{thm:ambi-uniquesolution}
The equations \eqref{eq:ambi-coeffs-symmetry1}--\eqref{eq:ambi-contactintersect3} have a unique solution in the $13$ parameters $\beta$, $\kappa_{1,i}, \kappa_{6,i}, \kappa_{5,i}, a_i, f_i, e_i$ ($i=1,2$), which are given by the following expressions:
\begin{align}
\beta &=
  \arctan\left[
\frac{1}{2} \left( \sqrt[3]{\sqrt{2}+1}- \sqrt[3]{\sqrt{2}-1}\, \right)
  \right],
\label{eq:ambi-symbolic-beta} \\
a_1 &= e_1 = \frac14 \operatorname{cosec} \beta = \frac{1}{4} \sqrt{4+\sqrt[3]{71+8 \sqrt{2}}+\sqrt[3]{71-8 \sqrt{2}}}, 
\\
a_2 &= e_2 = 0, \\
\kappa_{1,2} &= \kappa_{6,2} = \kappa_{5,2} = \frac12,
\label{eq:ambi-k62}
\\
\kappa_{1,1} &= 1-a_1,
\\
\kappa_{6,1} &=1-\frac43 a_1,
\label{eq:ambi-k61}
\\
\kappa_{5,1} &= 1-\frac53 a_1,
\\
f_1 &= 
\frac{\Big(83+\sqrt[3]{420619+15104 \sqrt{2}}+\sqrt[3]{420619-15104 \sqrt{2}}\Big)^{1/4}}{3 \sqrt{2 \left(2-\sqrt{2}\right)}}, 
\label{eq:ambi-f1}
\\
f_2 &=
 -(\sqrt{2}-1)f_1,
 \label{eq:ambi-f2}
\end{align}
The numerical values of these parameters are listed in Table~\ref{table:ambi-numerical-constants}. 
\end{thm}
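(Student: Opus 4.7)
The plan is to collapse the 13-variable system to a single cubic in $\tan\beta$ by systematically eliminating variables, and then to solve that cubic in closed form. First I would observe that equations \eqref{eq:ambi-coeffs-symmetry1}--\eqref{eq:ambi-coeffs-symmetry3} are already explicit definitions of $e_1,e_2,f_2$ in terms of $a_1,a_2,f_1$, and that the initial conditions \eqref{eq:ambi-xA-relations1}--\eqref{eq:ambi-xA-relations3} follow from a direct computation of $\mathbf{A}(0)$ via \eqref{eq:contact-pathA}, giving $a_2=0$, $\kappa_{1,1}=1-a_1$, $\kappa_{1,2}=1/2$. This removes six of the thirteen unknowns immediately, leaving the seven unknowns $\beta, a_1, f_1, \kappa_{6,1}, \kappa_{6,2}, \kappa_{5,1}, \kappa_{5,2}$.

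Next I would extract the contact condition \eqref{eq:ambi-contactintersect3} as a separate, easily handled relation. Differentiating \eqref{eq:ode1-gensolution} (using $\rotmat{t}' = \rotmat{t}\rotmat{\pi/2}$) yields a clean expression for $\mathbf{x}_1'(t)$ in the rotating frame, from which $\langle \mathbf{x}_1'(\beta), \nvecone_\beta\rangle = 0$ reduces to a single linear equation in $a_1$. I expect this to give the clean identity $a_1 = \tfrac14\operatorname{cosec}\beta$ matching the theorem. The four continuity equations \eqref{eq:ambi-cont12}--\eqref{eq:ambi-diff12} at $t=\beta$ are then a linear system of four scalar equations in the three unknowns $\kappa_{6,1}, \kappa_{6,2}, f_1$, with $a_1$ and $\beta$ as parameters; solving this overdetermined system should produce $\kappa_{6,1} = 1 - \tfrac43 a_1$, $\kappa_{6,2} = 1/2$, the explicit formula \eqref{eq:ambi-f1} for $f_1$, and one leftover consistency condition relating $a_1$ and $\beta$. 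The analogous continuity conditions at $t = \pi/2 - \beta$ (one half of which is already noted to be redundant) determine $\kappa_{5,1} = 1 - \tfrac53 a_1$ and $\kappa_{5,2} = 1/2$, or equivalently these follow by left--right reflection symmetry from the $\kappa_{6,i}$ values.

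The main obstacle will be the algebraic reduction of the consistency condition to a simple form: since $\mathbf{x}_6$ involves half-angle trigonometric functions of $t$ while $\mathbf{x}_1$ involves full-angle functions, combining them cleanly requires careful use of half-angle and sum-to-product identities. Combining the consistency condition with $a_1 = \tfrac14\operatorname{cosec}\beta$ should eliminate $a_1$ and yield a single transcendental equation in $\beta$, which I expect to reduce --- in accordance with the abstract and the explicit form \eqref{eq:ambi-symbolic-beta} --- to the cubic
$$4\tan^3\beta + 3\tan\beta = 1.$$

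To solve this in closed form I would apply the hyperbolic substitution $\tan\beta = \sinh\theta$, so that the triple-angle identity $\sinh(3\theta) = 4\sinh^3\theta + 3\sinh\theta$ reduces the equation to $\sinh(3\theta) = 1$, giving $\theta = \tfrac13\ln(1+\sqrt{2})$ and hence $\tan\beta = \tfrac12\bigl((\sqrt{2}+1)^{1/3} - (\sqrt{2}-1)^{1/3}\bigr)$, which is \eqref{eq:ambi-symbolic-beta}. Uniqueness then follows because $4Y^3 + 3Y - 1$ has derivative $12Y^2 + 3 > 0$ and hence exactly one real root, which lies in $(0,1)$, placing $\beta$ in the expected range $(0, \pi/4)$. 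Back-substituting $\tan\beta$ into $\operatorname{cosec}\beta = \sqrt{1 + \cot^2\beta}$ produces the stated radical form of $a_1$; analogous back-substitution gives the explicit closed form \eqref{eq:ambi-f1} for $f_1$, and the remaining formulas \eqref{eq:ambi-k62}--\eqref{eq:ambi-f2} follow directly from the reductions already carried out.
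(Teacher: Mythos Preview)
Your proposal is correct and follows essentially the same elimination strategy as the paper: treat the twelve ``linear'' parameters as unknowns to be solved in terms of $\beta$, leaving one transcendental equation that turns out to be algebraic in $\tan\beta$ with the cubic $4\tan^3\beta+3\tan\beta-1=0$ as its minimal form. Where the paper simply hands the linear system to \texttt{Mathematica}, obtains the half-angle form
\[
3\sin(\beta/2)+\sin(3\beta/2)+(\sqrt{2}-1)\bigl(-3\cos(\beta/2)+\cos(3\beta/2)\bigr)=0,
\]
and asserts it is algebraic in $\tan\beta$ and ``easily solved'', you organize the elimination by hand in a natural order (initial data, then the contact condition giving $a_1=\tfrac14\csc\beta$, then the gluing at $\beta$, then at $\pi/2-\beta$), and you supply two ingredients the paper omits: the hyperbolic substitution $\tan\beta=\sinh\theta$ that reduces the cubic to $\sinh(3\theta)=1$ and yields the radical expression for $\tan\beta$ directly, and the monotonicity argument ($12Y^2+3>0$) that establishes uniqueness of the real root cleanly. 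These are genuine improvements in exposition over the computer-assisted verification the paper relies on; the underlying route is the same.
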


\begin{table}
$$
\begin{array}{|cl|cl|}
\hline
\beta & \phantom{-}0.289653820817320941\ldots  \ \ & \ \ & 
\\[5pt]
\kappa_{1,1} & \phantom{-}0.124712637587267758\ldots  & a_1 & \phantom{-}0.875287362412732241\ldots
\\
\kappa_{1,2} & \phantom{-}1/2 & a_2 & \phantom{-}0
\\[5pt]
\kappa_{6,1} & -0.167049816550309655  \ldots & f_1 & \phantom{-}1.202 938 908 156 911 389\ldots
\\
\kappa_{6,2} & \phantom{-}1/2  & f_2 & -0.498 273 610 464 875 672 \ldots
\\[5pt]
\kappa_{5,1} & -0.458812270687887068\ldots & e_1 & 
\phantom{-}0.875 287 362 412 732 241\ldots
\\
\kappa_{5,2} & \phantom{-}1/2 & e_2 & 
\phantom{-}0 
\\[2pt] \hline
\end{array}
$$
\caption{Numerical values for the constants in our construction.}
\label{table:ambi-numerical-constants}
\end{table}

\begin{proof}
The system of equations we wrote down consists of precisely $13$ independent scalar equations in the $13$ variables, namely \eqref{eq:ambi-coeffs-symmetry1}--\eqref{eq:ambi-cont34} and \eqref{eq:ambi-contactintersect3}, together with four additional equations that were pointed out to be redundant. Moreover, the equations are all linear in the $12$ parameters $\kappa_{1,i}, \kappa_{6,i}, \kappa_{5,i}, a_i, f_i, e_i$ ($i=1,2$). Using \texttt{Mathematica}, we can solve the linear system consisting of the first $12$ equations of the $13$, to get expressions for these linear parameters as functions of $\beta$. Substituting these expressions back into the remaining equation gives a single nonlinear equation for $\beta$, which upon simplification becomes the relation
$$
3 \sin \left(\frac{\beta }{2}\right)+\sin \left(\frac{3 \beta }{2}\right)+ \left(\sqrt{2}-1\right) \left( -3\cos \left(\frac{\beta }{2}\right)+\cos \left(\frac{3 \beta }{2}\right) \right)=0.
$$
\textit{Crucially, this equation is algebraic in
$Z=\tan(\beta)$};
it can be easily solved, to give that
$$ 
\tan(\beta)=
\frac{1}{2} \left( \sqrt[3]{\sqrt{2}+1}- \sqrt[3]{\sqrt{2}-1}\, \right).
$$
This proves that the system has a unique solution (under the assumption $0<\beta<\pi/4$) and establishes the relation \eqref{eq:ambi-symbolic-beta}. Once $\beta$ is found, its value can be substituted into the formulas for the other $12$ parameters, which are all rational functions in the algebraic numbers $\cos(\beta/2)=\big(\frac12+\frac{1}{2\sqrt{1+Z^2}}\big)^{1/2}$ and $\sin(\beta/2)=\big(\frac12-\frac{1}{2\sqrt{1+Z^2}}\big)^{1/2}$. This shows that these parameters are algebraic numbers as well. Routine algebraic computations, which would be tedious to do by hand but can be performed automatically in \texttt{Mathematica} or other symbolic math applications, can now be used to verify the correctness of the formulas listed in the theorem. The details are found in the \texttt{MovingSofas} companion software package \cite[Section~10]{romik-sofa-mathematica}.
\end{proof}

We summarize our findings with the following theorem, which is the main result of the paper.

\begin{thm} Let $\mathbf{x}$ be the rotation path \eqref{eq:ambi-rotation-path}, with the associated numerical parameters being given by \eqref{eq:ambi-symbolic-beta}--\eqref{eq:ambi-f2}. Denote $\Sigma=S_\mathbf{x}\cap \rho(S_\mathbf{x})$, where $\rho$ is the affine reflection in the plane across the line $y=1/2$. Then $\Sigma$ is a shape that can move around corners both to the left and to the right. The rotation path~$\mathbf{x}$ is the unique one 
satisfying the assumptions 1--5 stated at the beginning of this section and that satisfies the necessary conditions from Theorem~\ref{thm:odes} at all $t\in (0,\pi/2)\setminus\{\beta,\pi/2-\beta\}$. The area $\Delta$ of the shape $\Sigma$ is given by \eqref{eq:ambi-area-closedform}, and the distance $\lambda$ between the left and right endpoints of $\Sigma$ is given by
\eqref{eq:ambi-length-closedform}.
\end{thm}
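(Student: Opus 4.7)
The theorem contains four assertions, which I would address in order.

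\textbf{Uniqueness of $\mathbf{x}$.} Assumptions 1--5 of Section~\ref{sec:ambidextrous} together with Theorem~\ref{thm:odes} force $\mathbf{x}$ to be a general solution of the corresponding ODE on each phase of \eqref{eq:ambi-contact-phases}, yielding the piecewise form \eqref{eq:ambi-rotation-path}. The continuous differentiability at $\beta$ and $\pi/2-\beta$, the left-to-right symmetry \eqref{eq:gerver-symmetry}, the initial values $\mathbf{x}(0)=(0,0)^\top$ and \eqref{eq:ambi-assumptionA}, and the prescribed contact-set transitions together encode the system \eqref{eq:ambi-coeffs-symmetry1}--\eqref{eq:ambi-contactintersect3}. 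Theorem~\ref{thm:ambi-uniquesolution} gives the unique solution to this system, establishing uniqueness.

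\textbf{Ambidextrous property.} Since $\rho^2=\mathrm{id}$, $\rho(\Sigma)=\Sigma$, so $\Sigma$ is $\rho$-symmetric. By construction \eqref{eq:rotation-shape}, $S_\mathbf{x}$ is a moving sofa for the corridor $\hallway$, so $\Sigma\subseteq S_\mathbf{x}$ can be transported through $\hallway$ using the rigid motion encoded by $\mathbf{x}$. For the opposite turn, consider the mirror-image hallway $L':=\armh\cup\rho(\armv)=\rho(\hallway)$; since $\rho$ fixes $\armh$ setwise, a shape $T$ navigates $L'$ via some rigid motion iff $\rho(T)$ navigates $\hallway$ via the $\rho$-conjugate motion. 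Applying $\rho$ to $\Sigma\subseteq\rho(S_\mathbf{x})$ yields $\rho(\Sigma)=\Sigma\subseteq S_\mathbf{x}$, so $\rho(\Sigma)$ navigates $\hallway$ and therefore $\Sigma$ navigates $L'$. Hence $\Sigma$ is ambidextrous.

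\textbf{Area.} Using Theorem~\ref{thm:contact-paths} with $\mathbf{x}_1,\mathbf{x}_6,\mathbf{x}_5$, write each contact path $\mathbf{A},\mathbf{B},\mathbf{C},\mathbf{D}$ in closed form on its interval of definition, obtaining the boundary of $S_\mathbf{x}$ (and, by reflection, of $\rho(S_\mathbf{x})$) as a finite union of explicit algebraic arcs; their intersection gives the 18 boundary arcs of $\Sigma$. Applying Green's theorem piecewise,
\[
\Delta = \tfrac12\oint_{\partial\Sigma}\bigl(x\,dy - y\,dx\bigr),
\]
each contribution is an elementary integral in $t$. Contributions from the Case~1 and Case~5 pieces integrate to expressions algebraic in $\cos\beta$ and $\sin\beta$; the Case~6 piece, involving $\cos(t/2)$ and $\sin(t/2)$ on $[\beta,\pi/2-\beta]$, additionally contributes a term linear in $\beta$, which is precisely the $\arctan Y=\beta$ summand in \eqref{eq:ambi-area-closedform}. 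Substituting the closed form \eqref{eq:ambi-symbolic-beta} and simplifying via the cubic equation satisfied by $\tan\beta$ yields \eqref{eq:ambi-area-closedform}; this final reduction is best carried out in \texttt{Mathematica} \cite[Section~10]{romik-sofa-mathematica}. The main obstacle of the proof is this bookkeeping-heavy area computation, but it is entirely algorithmic.

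\textbf{Length.} The horizontal extent of $\Sigma$ is bounded on the right by the $\armh$-constraint $x\le1$ from \eqref{eq:rotation-shape}, saturated at $\mathbf{A}(0)=(1,1/2)^\top$, and on the left by the analogous constraint coming from $\mathbf{x}(\pi/2)+\rotmat{\pi/2}(\armv)$, saturated at $\mathbf{C}(\pi/2)$. A direct computation from \eqref{eq:ode5-gensolution} using $e_1=a_1$, $e_2=0$, $\kappa_{5,1}=1-\tfrac{5}{3}a_1$, $\kappa_{5,2}=\tfrac12$ gives $\mathbf{x}_5(\pi/2)=(2-\tfrac{8}{3}a_1,\,0)^\top$, so $\lambda=\tfrac{8}{3}a_1$. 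Combined with the closed form for $a_1$ from Theorem~\ref{thm:ambi-uniquesolution}, this is precisely \eqref{eq:ambi-length-closedform}.
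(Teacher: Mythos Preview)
Your proposal is correct and follows essentially the same strategy as the paper: the uniqueness and ambidextrous claims are handled exactly as the paper does (by deferring to Theorem~\ref{thm:ambi-uniquesolution} and the reflection argument, respectively), and both the area and length reduce to explicit computations ultimately verified symbolically.

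The only noteworthy differences are in the bookkeeping choices. For the length, the paper computes $\mathbf{C}(\pi/2)$ directly via \eqref{eq:contact-pathC} to get $\lambda = 1 - C_1(\pi/2) = 1 + a_1 - \kappa_{5,1} = \tfrac{8}{3}a_1$, whereas you compute $\mathbf{x}_5(\pi/2)$ and read off the left wall of $\mathbf{x}(\pi/2)+R_{\pi/2}(\armv)$; both routes land on $\tfrac{8}{3}a_1$, though your jump from $\mathbf{x}_5(\pi/2)=(2-\tfrac{8}{3}a_1,0)^\top$ to $\lambda=\tfrac{8}{3}a_1$ would benefit from the one-line observation that the left edge of that translated strip sits at $x = x_{5,1}(\pi/2)-1 = 1-\tfrac{8}{3}a_1$. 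For the area, the paper exploits the fourfold symmetry to write $\Delta$ as $4$ times a sum of three ordinary integrals in $t$ (over $\mathbf{A}$, $\mathbf{B}$, and $\mathbf{x}$), while you invoke Green's theorem over the full $18$-arc boundary; the paper's decomposition is shorter to set up, but yours is equally valid and equally dependent on symbolic computation for the final simplification (which in the paper is in \cite[Section~11]{romik-sofa-mathematica}, not Section~10).
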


\begin{proof} We have already explained all the claims, except the computation of the area $\Delta$ and the length $\lambda$ of the shape. 
For the length, note that the coordinates of the left and right endpoints of $\Sigma$ are $\mathbf{C}(\pi/2)=(C_1(\pi/2),1/2)$ and $\mathbf{A}(0)=(1,1/2)$, so we have that
\begin{align*}
\lambda &= 1-C_1(\pi/2) = 1+a_1-\kappa_{5,1}=1+a_1-\left(1-\frac53 a_1\right)
\\ &=  \frac83 a_1 = \frac{2}{3} \sqrt{4+\sqrt[3]{71+8 \sqrt{2}}+\sqrt[3]{71-8 \sqrt{2}}},
\end{align*}
as claimed. Regarding the area, from the left-right and up-down symmetry of the shape we see that $\Delta$ can be expressed as the sum of integrals
\begin{align*}
\Delta &= 4\Bigg[\int_\beta^{\pi/2} (1/2-A_1(t))A_2'(t)\,dt
\\ & \qquad\qquad
+ \int_\beta^{\pi/2} (1/2-B_1(t))B_2'(t)\,dt  +
\int_\beta^{\pi/4} (x_1(t)-1/2)x_2'(t)\,dt\Bigg],
\end{align*}
where we denote $\mathbf{x}(t)=(x_1(t),x_2(t))^\top$, $\mathbf{A}(t)=(A_1(t),A_2(t))^\top$, $\mathbf{B}(t)=(B_1(t),B_2(t))^\top$. The integration can be performed symbolically and the result simplified by \texttt{Mathematica}
to show that indeed
$$
\Delta = \sqrt[3]{2 \sqrt{2}+3}+\sqrt[3]{3-2 \sqrt{2}}-1 + \beta.
$$
See \cite[Section~11]{romik-sofa-mathematica} for the details.
\end{proof}

Our calculations involved several curious algebraic numbers. We expressed them in radicals, but of course they can be alternatively (and perhaps better) described in terms of their minimal polynomials. The minimal polynomials, computed again using \texttt{Mathematica} \cite[Section~12]{romik-sofa-mathematica}, are listed in Table~\ref{table:minpolys}.

\begin{table}
$$
\!\!\!
\begin{array}{|cl|}
\hline
\textrm{Quantity} & \textrm{Minimal polynomial} \\
\hline
\vphantom{1^{1^{1^{1^1}}}}
\tan(\beta) & 4x^3+3x-1
\\[3pt]
\sin(\beta) & 2 x^6+3 x^4+12 x^2-1
\\[3pt]
\cos(\beta) & 2 x^6-9 x^4+24 x^2-16 
\\[3pt]
a_1, e_1 & 2048 x^6-1536 x^4-24 x^2-1
\\[3pt]
f_1, f_2 & 
{\scriptstyle 4251528 x^{12}-9920232 x^{10}+6672537 x^8-1936224 x^6+256608 x^4-13824 x^2+256}
\\[3pt]
\kappa_{1,1} & 2048 x^6-12288 x^5+29184 x^4-34816 x^3+21480 x^2-6096 x+487
\\[3pt]
\kappa_{6,1} & 729 x^6-4374 x^5+9963 x^4-10692 x^3+5076 x^2-432 x-272
\\[3pt]
\kappa_{5,1} & 
{\scriptstyle 
1492992 x^6-8957952 x^5+19284480 x^4-17418240 x^3+3597480 x^2+3753648 x-1768033
}
\\[3pt]
\Delta-\beta & x^3+3 x^2-8
\\[3pt]
\lambda & 729 x^6-3888 x^4-432 x^2-128
\\[2pt]
\hline
\end{array}
$$
\caption{The minimal polynomials of some of the algebraic numbers associated with the shape $\Sigma$.}
\label{table:minpolys}
\end{table}

\newpage

\section{Geometric and algebraic properties of the shape~$\Sigma$}

\label{sec:geom-alg-properties}

The shape $\Sigma$ we constructed seems like quite a natural and symmetric object. Moreover, in addition to its pleasing analytic and algebraic properties discussed so far, the shape exhibits several additional interesting geometric and algebraic relationships. The geometric properties are shown in Fig.~\ref{fig:ambi-annotated}; note the multiple appearances of the critical angle $\beta$, and the fact that the segments $\sigma_2, \sigma_3, \sigma_{16}, \sigma_{17}$ and $\sigma_7, \sigma_8, \sigma_{11}, \sigma_{12}$ of the boundary of the shape are circular arcs lying on the circles of radius $1/2$ around the ``focal points'' $F_2$ and $F_1$, respectively (which in particular means that the pairs of segments $\sigma_j, \sigma_{j+1}$ for each of $j=2,7,11,16$ could in principle be considered as single analytical segments, reducing the total number of segments involved in the description of the shape from $18$ to $14$; we chose however to describe the segments in each of these pairs separately, since they arise out of separate analytical processes and so far as we know it is only by computation that one can verify they belong to the same analytic curve). Another interesting geometric property, which can be easily verified from our formulas but for which we can see no obvious geometric explanation, is that the distance between the two focal points is precisely half the total length of the shape.

\begin{figure}
\begin{center}
\scalebox{0.6}{\includegraphics{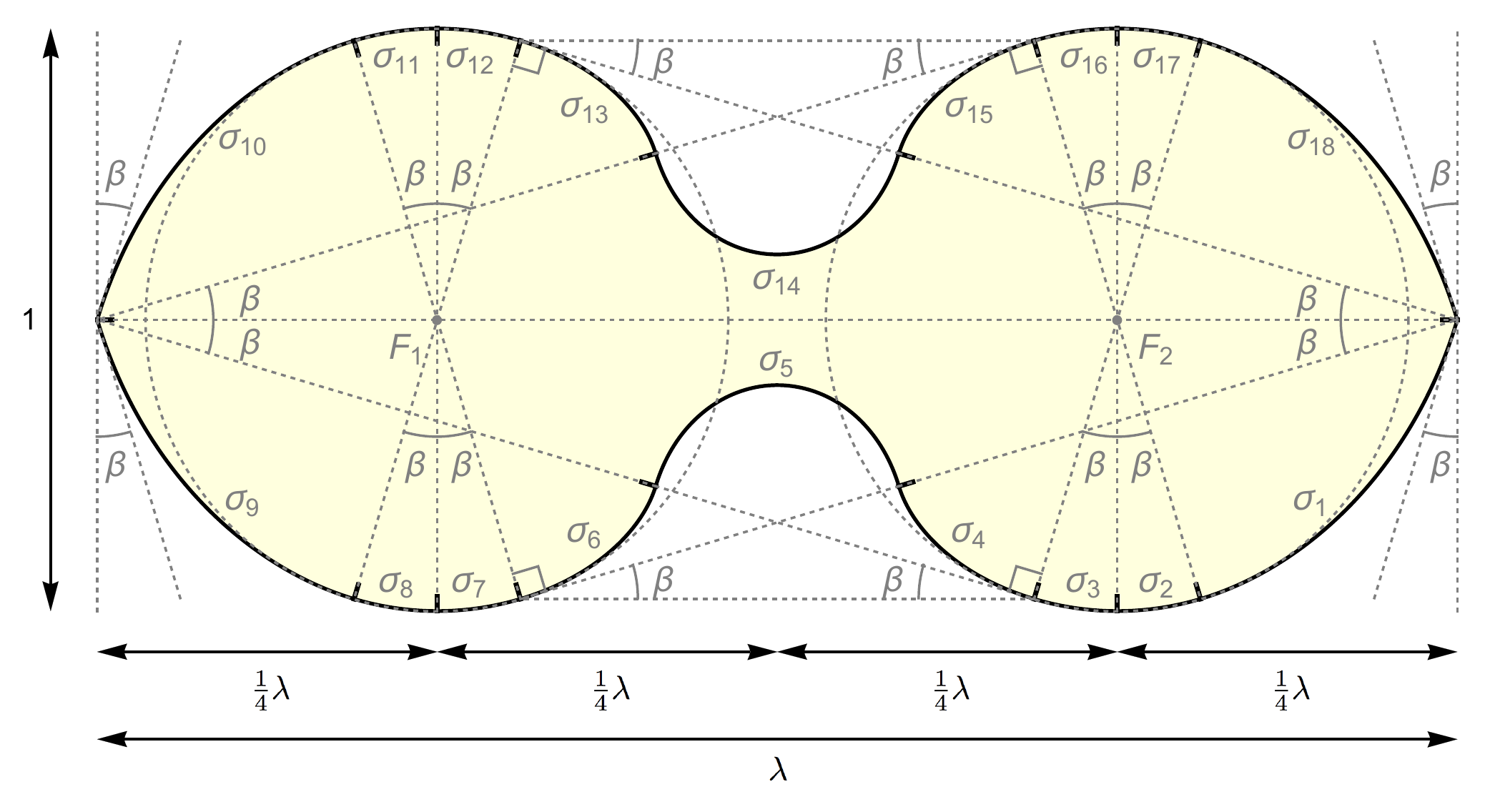}}
\caption{The shape $\Sigma$ with annotations highlighting interesting geometric relationships.}
\label{fig:ambi-annotated}
\end{center}
\end{figure}

Turning to algebraic properties of $\Sigma$, we have the feature that the $18$ segments of the boundary all lie on algebraic curves, as was mentioned in the introduction. A simple example of this are the circular arcs $\sigma_2, \sigma_3, \sigma_{16}, \sigma_{17}$, $\sigma_7, \sigma_8, \sigma_{11}, \sigma_{12}$ (in the notation of Fig.~\ref{fig:ambi-annotated}) already pointed out above. More intriguingly, there are three other distinct algebraic curves (up to obvious symmetries) of degree $6$ that appear as analytic continuations of boundary segments. To write down the equations for these curves, it is convenient to switch to new coordinates $X, Y$ defined through the affine change of variables
$$
X = \frac{x-\kappa_{6,1}}{\tfrac14 \sqrt{2-\sqrt{2}} f_1}, 
\qquad
Y = \frac{y-\kappa_{6,2}}{\tfrac14 \sqrt{2-\sqrt{2}} f_1}
$$
from the coordinates $x,y$ used in our original description of the shape $\Sigma$
(where $f_1$, $\kappa_{6,1}$, $\kappa_{6,2}$ are the values given in Theorem~\ref{thm:ambi-uniquesolution}). In these new coordinates, it can be shown (see the companion \texttt{Mathematica} package \cite[Section~13]{romik-sofa-mathematica}) that the boundary segments $\sigma_{18}$ and $\sigma_9$ of $\Sigma$ both lie on the algebraic curve
$$
P(X,Y) = 0,
$$
and the segments $\sigma_4$ and $\sigma_5$ lie on the algebraic curves
\begin{align*}
Q(X,Y) & = 0, \\ 
R(X,Y) & = 0,
\end{align*}
respectively, where $P(X,Y)$, $Q(X,Y)$, $R(X,Y)$ are polynomials defined by
\begin{align*}
P(X,Y) &= 
\left(X^2+Y^2-8\right)^3-216 (Y-X)^2,
\\[5pt]
Q(X,Y) &= 
\left(X^2+Y^2\right)^3
-12 \gamma _1 \left(X^2+Y^2\right)^2
-216 \sqrt{\gamma _2} \left(X^2+Y^2\right) (Y-X)
\\ & \qquad -12 \gamma _3 \left(X^2+Y^2\right)
-432 \sqrt{\gamma _4} (Y-X)
+432 X Y -32 \gamma _5,
\\[5pt]
R(X,Y) &= 
\left(X^2+Y^2\right)^3
-24 \alpha _1 \left(X^2+Y^2\right)^2
+48 \alpha _2 \left(X^2+Y^2\right)
\\ & \qquad +13824 \sqrt{\alpha _3} Y
+4096 \alpha _4.
\end{align*}
Here, $\gamma_1, \gamma_2, \gamma_3, \gamma_4, \gamma_5$, $\alpha_1, \alpha_2, \alpha_3, \alpha_4$ are cubic algebraic numbers, which can be expressed in terms of the constant $Z= (4+2\sqrt{2})^{1/3} + (4-2\sqrt{2})^{1/3}$ in the form
%cubic algebraic numbers, and whose definition in terms of their minimal polynomials is given in Table~\ref{table:alphagamma-minpolys}. 
\begin{equation*}
\begin{array}{rlcrl}
\gamma_1 &= \displaystyle -3Z+14, 
& \qquad & 
\alpha_1 &= \displaystyle -3Z+16,
\\[3pt]
\gamma_2 &= \displaystyle -Z+4,
& &
\alpha_2 &= \displaystyle 27 Z^2 -240 Z + 592,
\\[3pt]
\gamma_3 &= \displaystyle -27 Z^2 + 156 Z - 190,
& &
\alpha_3 &= \displaystyle 12Z^2 -54Z + 56,
\\[3pt]
\gamma_4 &= \displaystyle 8Z^2 -26Z + 8,
& &
\alpha_4 &= \displaystyle -9Z + 28.
\\[3pt]
\gamma_5 &= \displaystyle 9Z-20, 
%\\
%\alpha_1 &= -3Z+16,
%\\
%\alpha_2 &= \frac{27}{2} Z^2 -120 Z + 296,
%\\
%\alpha_3 &= 12Z^2 -54Z + 56,
%\\
%\alpha_4 &= -9Z + 28.
\end{array}
\end{equation*}
(This way of representing the constants $\gamma_i, \alpha_j$ was suggested to us by Greg Kuperberg, who also found the above way of expressing the polynomial $P(X,Y)$ that simplifies our earlier formula.)

%It seems remarkable that such intricate algebraic relationships arise out of an easy-to-state geometric question.

\begin{figure}
\begin{center}
\begin{tabular}{cc}
\hspace{-30pt}
\scalebox{0.65}{\includegraphics{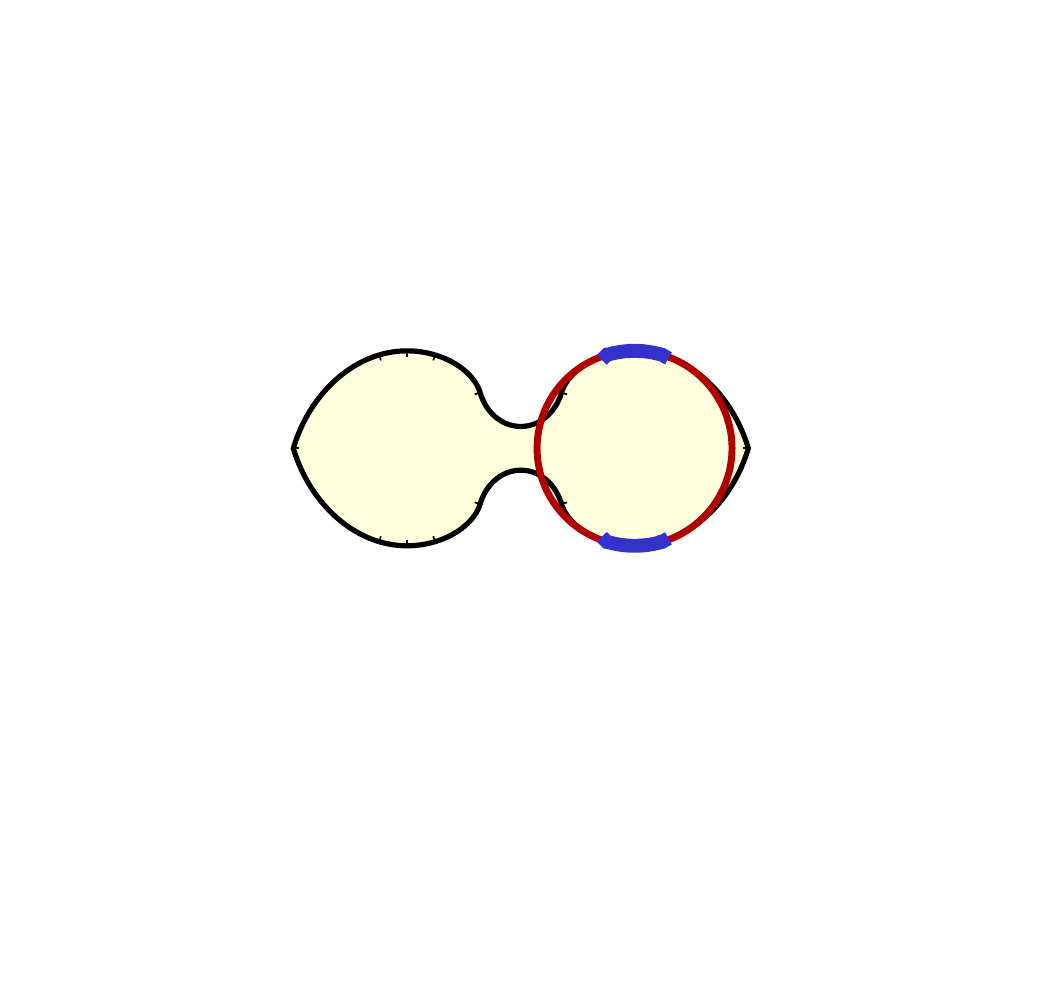}}
\hspace{-30pt}
&
\scalebox{0.65}{\includegraphics{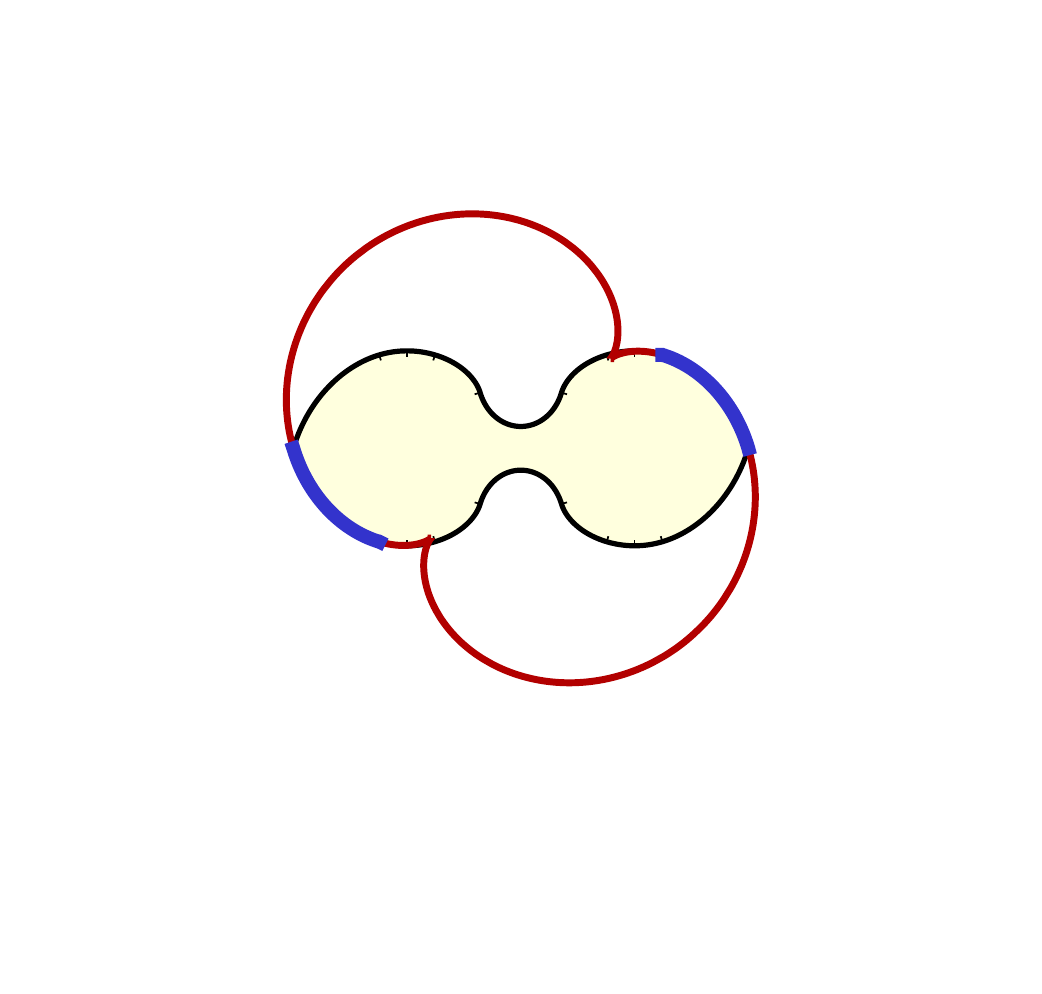}}
\\[-50pt] (a)&(b)
\\[20pt]
\hspace{-30pt}
\scalebox{0.65}{\includegraphics{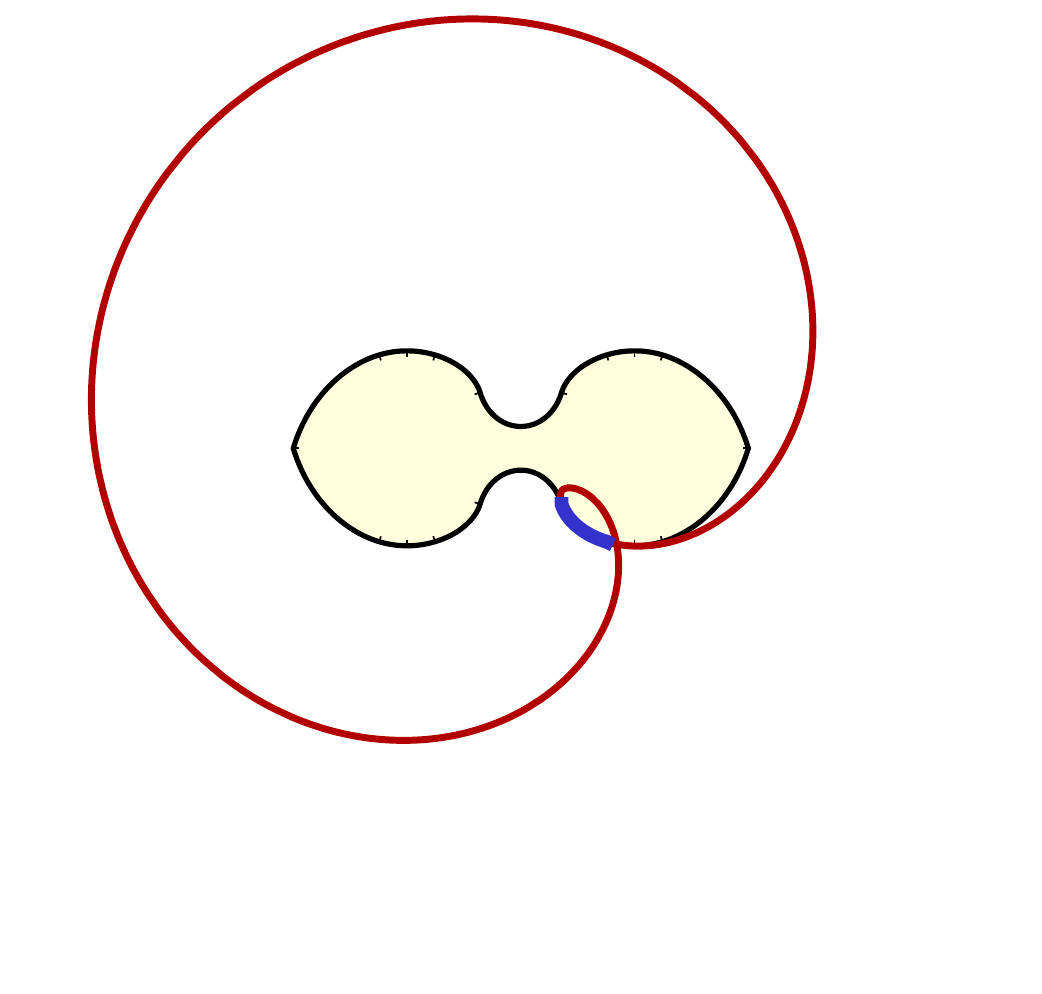}}
\hspace{-30pt}
&
\scalebox{0.65}{\includegraphics{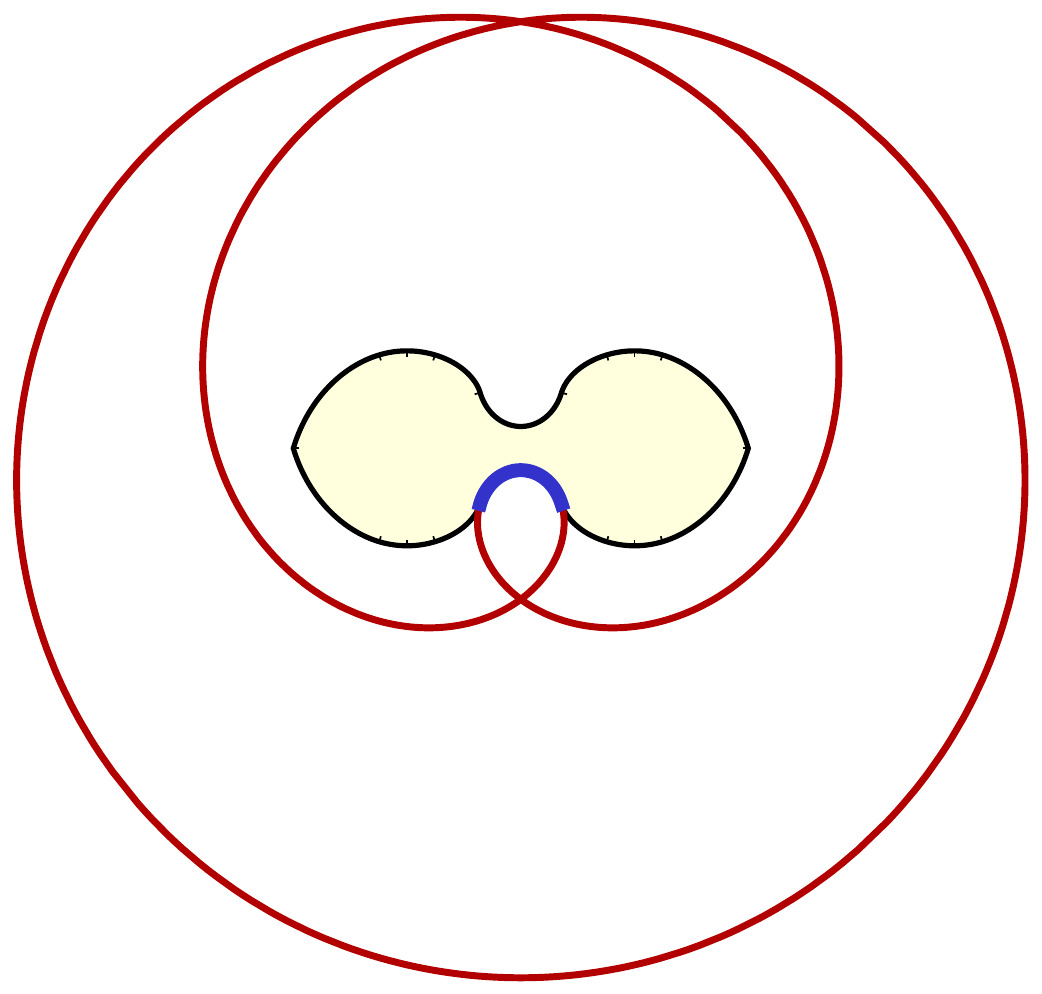}}
\\[5pt] (c)&(d)
\end{tabular}

\caption{The segments of the boundary of the shape $\Sigma$ are pieces of algebraic curves. Here we see the four distinct algebraic curves that appear as analytic continuations of boundary segments (along with their various symmetric reflections), consisting of: (a) a circle, and the three sextic algebraic curves: (b) $P(X,Y)=0$; (c) $Q(X,Y)=0$; (d) $R(X,Y)=0$.
}
\label{fig:algebraic-curves}
\end{center}
\end{figure}

%\begin{table}
%$$
%\!\!\!
%\begin{array}{|clc|}
%\hline
%\textrm{Quantity} & \textrm{Minimal polynomial} & \textrm{Numerical value} \\
%\hline
%\vphantom{1^{1^{1^{1^1}}}}
%\gamma_1 & x^3-42 x^2+534 x-1772
%& \phantom{0}5.141\ldots
%\\
%\gamma_2 & x^3-12 x^2+42 x-32 & \phantom{0}1.048\ldots
%\\
%\gamma_3 & x^3+447 x^2+53184 x-1080928
%& 17.614\ldots
%\\
%\gamma_4 & x^3-120 x^2+4968 x-4608 &  \phantom{0}0.949\ldots
%\\
%\gamma_5 & x^3+60 x^2+714 x-7552
%& \phantom{0}6.562\ldots
%\\[5pt]
%\alpha_1 & x^3-48 x^2+714 x-3016 & \phantom{0}7.145\ldots
%\\
%\alpha_2 & x^3-1050 x^2+317793 x-15387488 & 59.428\ldots
%\\
%\alpha_3 & x^3-312 x^2+28776 x-32768
%& \phantom{0}1.153\ldots
%\\
%\alpha_4 & x^3-84 x^2+1866 x-2512 & \phantom{0}1.437\ldots
%\\
%\hline
%\end{array}
%$$
%\caption{The minimal polynomials of the algebraic numbers $\gamma_1, \gamma_2, \gamma_3, \gamma_4, \gamma_5$, $\alpha_1, \alpha_2, \alpha_3, \alpha_4$ appearing in the definition of the polynomials $Q(X,Y)$ and~$R(X,Y)$. Each of the nine parameters is the unique real root of its minimal polynomial.}
%\label{table:alphagamma-minpolys}
%\end{table}

The algebraic curves and their relations to the boundary segments of $\Sigma$ are shown in Fig.~\ref{fig:algebraic-curves}. Note also that since the different curve segments satisfy algebraic equations over an algebraic extension field of $\mathbb{Q}$, they also satisfy algebraic equations of higher degree with \emph{integer} coefficients.

\section{Open problems}

\label{sec:final-remarks}

We conclude with a few open problems:
\begin{enumerate}
\item Prove that Gerver's sofa and our shape $\Sigma$ are \emph{local} maxima of the area functional in the moving sofa problem and ambidextrous moving sofa problem, respectively.

\item Do there exist other locally area-maximizing shapes? For example, is there an asymmetric version of Gerver's sofa --- that is, a construction that follows the pattern \eqref{eq:gerver-contact-phases} and is obtained by gluing together the functions $\mathbf{x}_j(t)$, $1\le j\le 5$, except that the transitions between the five types of contact point sets occur at four successive angles $\varphi, \theta, \eta, \tau$, where 
$$0<\varphi<\theta<\pi/4<\eta<\tau<\pi/2,$$ and it is not the case that $\eta=\pi/2-\theta$ and $\tau=\pi/2-\varphi$? Is there a version of Gerver's sofa (symmetric or asymmetric) in which
for the third phase of rotation when $\theta<t<\pi/2-\theta$, the assumption that $\contact_{\mathbf{x}}(t)=\{ \mathbf{x},\mathbf{A},\mathbf{C} \}$ is replaced by the modified condition 
$\contact_{\mathbf{x}}(t)=\{ \mathbf{x},\mathbf{A}, \mathbf{B}, \mathbf{C}, \mathbf{D} \}$ (corresponding to Case~6 of Theorem~\ref{thm:odes} instead of Case~3 as in Gerver's construction)?

\item Can the assumption that the rotation path is ``well-behaved'' in Theorem~\ref{thm:odes} be weakened or removed?

\item Are there other natural variants of the moving sofa problem that give rise to shapes that can be expressed in closed form and/or are piecewise algebraic?

\end{enumerate}

\end{document}